\setlist[enumerate]{font={\rm},itemsep=0.2\baselineskip}
\setlist[enumerate,1]{label={(\roman*)}}
\setlist[enumerate,2]{label={(\arabic*)}}
\newtheorem{theorem}{Theorem}[section]
\newtheorem{proposition}[theorem]{Proposition}
\newtheorem{corollary}[theorem]{Corollary}
\newtheorem{lemma}[theorem]{Lemma}
\theoremstyle{definition}
\newtheorem{construction}[theorem]{Construction}
\newtheorem{problem}[theorem]{Problem}
\theoremstyle{remark}
\newtheorem{example}[theorem]{Example}
\newtheorem*{conj*}{Conjecture}
\def\GL{\mathrm{GL}}
\def\S{\mathrm{S}}
\def\A{\mathrm{A}}
\def\D{\mathrm{D}}
\def\Sym{\mathrm{Sym}}
\def\Alt{\mathrm{Alt}}
\def\Cos{{\rm Cos}}
\def\Aut{{\rm Aut}}
\def\leqs{\leqslant}
\def\geqs{\geqslant}
\def\val{\mathrm{val}}
\def\K{{\bf K}}
\def\N{{\bf N}}
\begin{document}

\title{Symmetric Covers and Pseudocovers of Complete Graphs}

\author{Yan Zhou Zhu}
\address{Department of Mathematics, Southern University of Science and Technology\\
Shenzhen 518055, Guangdong\\
P. R. China}
\email{zhuyz2019@mail.sustech.edu.cn}

\keywords{complete graph; cover; pseudocover; automorphism group}

\begin{abstract}
	We first characterize all faithful arc-transitive covers of complete graphs and we give a general construction of such covers.

	For a counterpart of cover, we say a graph $\Gamma$ is a pseudocover of its quotient $\Sigma$ if they have the same valency and $\Gamma$ is not a cover of $\Sigma$.
	As the second result of this paper, we prove that the complete graph $\K_n$ has a connected arc-transitive pseudocover if and only if $n-1$ is not a prime.
\end{abstract}

\maketitle

\section{Introduction}\label{sec:introduction}

All graphs considered in this paper are finite, undirected and simple.
An \textit{arc} of a graph is an ordered pair of adjacent vertices, and a $2$-arc is a triple of distinct consecutive vertices.
A graph $\Gamma$ is said to be \textit{$G$-arc-transitive} or \textit{$(G,2)$-arc-transitive} if $G\leqs\Aut\Gamma$ is transitive on the arcs or $2$-arcs of $\Gamma$ respectively.
An arc-transitive graph is usually called a \textit{symmetric graph}.
The study of symmetric graphs is one of the main topics in algebraic graph theory, refer to \cite{biggs1974Algebraic,godsil2001Algebraic} for references.

For a $G$-arc-transitive graph $\Gamma=(V,E)$, the group $G\leqslant\Aut\Gamma$ is a transitive permutation group on the vertex set $V$.
A \textit{block system} of $G$ acting on the vertex set $V$ is a \textit{non-trivial $G$-invariant partition} $\mathcal{B}$ of $V$ where $B^g\in \mathcal{B}$ for any $B\in\mathcal{B}$ and $g\in G$.
If there is no non-trivial block system $\mathcal{B}$ admitted by $G$, namely each block system has size equal to $1$ or $|V|$, then we say that $G$ is \textit{primitive} on $V$ and $\Gamma$ is a \textit{$G$-primitive} symmetric graph.
The well-known O'Nan-Scott Theorem~\cite{liebeck1988NanScott} of permutation group theory and the theory of finite simple groups provide powerful tools for the study of symmetric graphs which are vertex-primitive, see \cite{li2001finite,praeger1993NanScott}.

Assume that $G$ is \textit{imprimitive} on $V$ with a non-trivial block system $\mathcal{B}$, then $G$ induces a transitive permutation group $G^\mathcal{B}$ on $\mathcal{B}$.
The \textit{quotient graph} $\Gamma_\mathcal{B}$ of $\Gamma$ is defined to be the graph with vertex set $\mathcal{B}$ and two blocks $B,C\in\mathcal{B}$ are adjacent in $\Gamma_\mathcal{B}$ if and only if there exist adjacent vertices $\beta$ and $\gamma$ in $\Gamma$ for some $\beta\in B$ and $\gamma\in C$.
We usually call the original graph an {\it extender} of the quotient graph.

Let $\Gamma$ be a $G$-arc-transitive graph and $\Gamma_\mathcal{B}$ be a quotient of $\Gamma$.
For an arc $(B,C)$ in $\Gamma_\mathcal{B}$, the subgraph $\Gamma[B,C]$ of $\Gamma$ generated by vertices in $B\cup C$ is a bipartite graph.
Since $\Gamma$ is $G$-arc-transitive, the subgraph $\Gamma[B,C]$ is independent of the choice of arc $(B,C)$ in $\Gamma_\mathcal{B}$.
In the particular case where $\Gamma[B,C]$ is a \textit{perfect mathching} between $B$ and $C$, we say $\Gamma$ is a \textit{cover} of $\Gamma_\mathcal{B}$.
Constructing and characterizing symmetric covers of given symmetric graphs is an important approach for studying symmetric graphs, refer to \cite{brouwer1989Distanceregular,du1998arctransitive}.

We say $\Gamma$ is a \textit{faithful $G$-arc-transitive cover} of $\Gamma_\mathcal{B}$ if $\Gamma$ is a cover of $\Gamma_\mathcal{B}$ and $G\leqs\Aut\Gamma$ is faithful on $\mathcal{B}$, that is, $G\leqs\Aut\Gamma_\mathcal{B}$.
We prove that each symmetric cover is a composition of a faithful cover and a normal cover, see Lemma~\ref{lem:faith}.
Symmetric covers of complete graphs induced by certain covering transformation groups have been obtained by \cite{du1998arctransitive,du20052arctransitive}.
The first main theorem of this paper, Theorem~\ref{thm:completecover}, characterizes faithful $G$-arc-transitive covers of $\K_n$, see Section~\ref{sec:cover}.

\begin{theorem}\label{thm:completecover}
	Let $\Sigma=\K_n$ with vertex set $\Omega$, $G\leqslant\Aut\Sigma=\S_n$, and let $\omega,\omega'\in\Omega$ be two distinct vertices.
	Then a graph $\Gamma$ is a non-trivial connected faithful $G$-arc-transitive cover of $\Sigma$ if and only if $\Gamma=\Cos(G,L,LgL)$ for some transitive proper subgroup $L$ of $G_\omega$ on $\Omega\setminus\{\omega\}$ and some element $g\in \mathbf{N}_G(L_{\omega'})$ such that $(\omega,\omega')^g=(\omega',\omega)$, $g^2\in L$ and $G=\langle L,g\rangle$.
\end{theorem}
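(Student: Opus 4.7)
The plan is to exploit the standard correspondence between coset graphs and arc-transitive graphs, translating the cover and faithfulness hypotheses into the stated group-theoretic conditions on $L$ and $g$, and conversely.

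For the forward direction, assume $\Gamma$ is a non-trivial connected faithful $G$-arc-transitive cover of $\Sigma$ with block system $\mathcal{B}$. Faithfulness lets me identify $\mathcal{B}$ with $\Omega$, so the block $B$ over $\omega$ has setwise stabilizer $G_\omega$. Pick $v \in B$ and set $L = G_v$; then $L\leqs G_\omega$ is proper since $|B|>1$. Because $\Gamma$ is a cover of $\K_n$, the block projection $\Gamma(v) \to \Omega\setminus\{\omega\}$ is a $G_v$-equivariant bijection (one neighbor per block), and arc-transitivity then yields that $L$ is transitive on $\Omega\setminus\{\omega\}$. Let $v'$ be the unique neighbor of $v$ in the block over $\omega'$; arc-transitivity supplies $g\in G$ reversing the arc $(v,v')$, which forces $(\omega,\omega')^g = (\omega',\omega)$ and $g^2 \in G_v\cap G_{v'}\leqs L$. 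Using the cover property, any $h\in L_{\omega'}$ stabilizes the block over $\omega'$ and fixes $v$, so must fix the unique neighbor $v'$ of $v$ in that block; hence $L_{\omega'} = G_v\cap G_{v'} = L\cap L^g$. Since $g^2\in L$, conjugation by $g$ preserves this intersection, giving $g\in \mathbf{N}_G(L_{\omega'})$. Connectedness of $\Gamma$ forces $G = \langle L, g\rangle$ via a standard walk argument, and $\Gamma\cong\Cos(G,L,LgL)$ is then the standard coset-graph representation.

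For the backward direction, let $\Gamma = \Cos(G,L,LgL)$, with vertex set $\{xL : x\in G\}$ and $xL\sim yL$ iff $x^{-1}y\in LgL$. Then $g^2\in L$ ensures $Lg^{-1}L = LgL$ and hence that $\Gamma$ is undirected; the left-multiplication action of $G$ is arc-transitive by construction; and $G = \langle L, g\rangle$ is the connectivity criterion. The projection $xL\mapsto xG_\omega$ to $\Omega$ is $G$-equivariant, giving a block system on which $G$ acts as it does on $\Omega$, hence faithfully. Since $L$ is transitive on $\Omega\setminus\{\omega\}$, $G$ is $2$-transitive on $\Omega$ and the quotient graph is $\K_n$. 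The crux is the cover property: the neighbors of $L$ are $\{hgL : h\in L\}$, with $hgL$ lying in the block over $h\omega'$. The identity $h_1gL = h_2gL$ is equivalent to $h_2^{-1}h_1 \in L\cap gLg^{-1}$, while $h_1\omega' = h_2\omega'$ is equivalent to $h_2^{-1}h_1 \in L_{\omega'}$, so the cover property reduces to showing $L\cap gLg^{-1} = L_{\omega'}$. The inclusion $\supseteq$ uses that $g$ normalizes $L_{\omega'}$; the inclusion $\subseteq$ is a short direct computation using $g\omega = \omega'$. Combined with transitivity of $L$ on $\Omega\setminus\{\omega\}$, this makes the map $\Gamma(L)\to\Omega\setminus\{\omega\}$ a bijection, confirming that $\Gamma$ is a cover.

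The main obstacle is coordinating the four conditions on $L$ and $g$, each encoding a distinct combinatorial feature (undirectedness, arc-reversal, vertex-level cover, connectedness), and establishing the pivotal identity $L\cap L^g = L_{\omega'}$ that welds the vertex-level and block-level combinatorics together; once this identity is in hand, both directions fall into place.
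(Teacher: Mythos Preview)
Your proof is correct and follows essentially the same route as the paper: both directions hinge on the identity $L\cap L^g=L_{\omega'}$, and your observation that $g$ normalizes this intersection because $g^2\in L$ is exactly what lies behind the paper's terse ``in particular, $g$ normalizes $L_{\omega'}$.'' The only differences are cosmetic: the paper packages the backward direction through the factorization $g=(\omega\omega')g_0$ of Construction~\ref{cons:coversn} and Lemma~\ref{lem:coversn} rather than verifying $L\cap L^g=L_{\omega'}$ directly from the normalizing hypothesis as you do, and you have silently switched to left cosets and a left action (writing $g\omega=\omega'$) while the statement and paper use right-action conventions---worth harmonizing, but not a mathematical gap.
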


This provides a generic construction of faithful symmetric covers of complete graphs.
For instance, taking $G=\S_n$, $g=(\omega\omega')$ and $L$ to be a regular cyclic subgroup $\mathbb{Z}_{n-1}$ of $\S_{n-1}$, the coset graph $\Gamma=\Cos(G,L,LgL)$ is a largest $G$-arc-transitive cover of $\K_n$ with $G\leqslant\Aut\K_n$ in the sense that the number of vertices is the largest.
This particularly has the following consequence.

\begin{corollary}
	For each integer $n\geqslant 4$, a complete graph $\K_n$ has arc-regular covers with $(n-2)!n$ vertices.
\end{corollary}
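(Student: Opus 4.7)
The plan is to realize the desired cover as a coset graph via Theorem~\ref{thm:completecover} with the explicit choice of $(G, L, g)$ flagged immediately before the corollary, then read off the vertex count by an index calculation and arc-regularity from the coset-graph valency formula.

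Concretely, I would fix a labeling $\Omega = \{\omega_1, \dots, \omega_n\}$ with $\omega = \omega_n$ and $\omega' = \omega_1$, set $G = \S_n$, $L = \langle c\rangle$ for the $(n-1)$-cycle $c = (\omega_1\,\omega_2\,\cdots\,\omega_{n-1})$, and $g = (\omega_n\,\omega_1)$, and then verify the hypotheses of Theorem~\ref{thm:completecover}. The group $L$ is a cyclic regular (hence transitive) subgroup of $G_\omega \cong \S_{n-1}$, and is proper in $G_\omega$ precisely because $n \geq 4$ gives $n-1 < (n-1)!$. Regularity of $L$ on $\Omega \setminus \{\omega\}$ forces $L_{\omega'} = 1$, so $\mathbf{N}_G(L_{\omega'}) = G$ automatically contains $g$; the conditions $(\omega,\omega')^g = (\omega',\omega)$, $g^2 = 1 \in L$, and $\langle L, g\rangle = \S_n$ (the last being a standard generation fact for $\S_n$ by an $(n-1)$-cycle together with a transposition moving its fixed point) are immediate. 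Thus Theorem~\ref{thm:completecover} produces $\Gamma := \Cos(G, L, LgL)$ as a connected faithful $G$-arc-transitive cover of $\K_n$.

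It then remains to count vertices and check arc-regularity. The vertex count is a direct index computation, $|V(\Gamma)| = [G : L] = n!/(n-1) = (n-2)!\, n$. For arc-regularity I would use the standard identification that, under the right-multiplication action of $G$ on $G/L$, the stabilizer of the arc $(L, Lg)$ equals $L \cap g^{-1}Lg$, while the coset-graph valency formula gives the valency of $\Gamma$ at $L$ as $[L : L \cap L^g]$. Since $\Gamma$ is a cover of $\K_n$ its valency is $n - 1 = |L|$, forcing $L \cap L^g = 1$; combined with the $G$-arc-transitivity supplied by Theorem~\ref{thm:completecover}, this shows $G$ acts regularly on the arcs of $\Gamma$. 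No step presents a serious obstacle — the delicate point is that $n \geq 4$ is exactly what makes $L$ a \emph{proper} subgroup of $G_\omega$, which is needed for Theorem~\ref{thm:completecover} to apply.
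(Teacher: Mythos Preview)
Your proposal is correct and follows exactly the approach the paper itself signals: the paper does not give a separate proof of the corollary but derives it from the sentence immediately preceding it, namely the choice $G=\S_n$, $g=(\omega\omega')$, and $L\cong\mathbb{Z}_{n-1}$ a regular cyclic subgroup of $G_\omega$, plugged into Theorem~\ref{thm:completecover}. Your verification of the hypotheses, the index computation $|G:L|=n!/(n-1)=(n-2)!\,n$, and the arc-regularity argument via $L\cap L^g=1$ are precisely the details the paper leaves implicit.
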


As noticed above, if $\Gamma$ is a cover of $\Gamma_\mathcal{B}$, then $\Gamma$ and $\Gamma_\mathcal{B}$ have the same valency.
The converse of the above statement is not generally true, and we say $\Gamma$ is a \textit{pseudocover} of $\Gamma_\mathcal{B}$ if $\val(\Gamma)=\val(\Gamma_\mathcal{B})$ and $\Gamma$ is not a cover of $\Gamma_\mathcal{B}$.
The first explicit example of pseudocover was discovered by Praeger, Li and Zhou in~\cite{li2010Imprimitive}.
Li and the author gave a criterion for deciding an extender to be a pseudocover in \cite{Li2022cover}, see Theorem~\ref{thm:pscover}.
The criterion gives a method for constructing symmetric pseudocovers of complete graphs with non-prime valencies.
Construction~\ref{cons:kab} leads to the following theorem.

\begin{theorem}\label{thm:complete}
	A complete graph $\K_n$ with $n\geq 3$ has a connected symmetric pseudocover if and only if $n-1$ is not a prime.
\end{theorem}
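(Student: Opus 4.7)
The plan is to treat the two implications separately: the ``only if'' via a local analysis of the bipartite subgraph between blocks in any symmetric extender of $\K_n$, and the ``if'' by invoking Construction~\ref{cons:kab} together with the pseudocover criterion of Theorem~\ref{thm:pscover}. For the common setup, suppose $\Gamma$ is a connected $G$-arc-transitive extender of $\K_n$ with block system $\mathcal{B}$ such that $\Gamma_{\mathcal{B}}=\K_n$. Since $\K_n$ is arc-transitive, $G$ acts $2$-transitively on $\mathcal{B}$. Fix $v$ in a block $B\in\mathcal{B}$; because $G_v$ is transitive on the neighbourhood $N(v)$, the collection $\mathcal{O}=\{C\in\mathcal{B}\setminus\{B\}:N(v)\cap C\neq\varnothing\}$ is a single $G_v$-orbit, and the intersection number $d:=|N(v)\cap C|$ is constant over $C\in\mathcal{O}$. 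Writing $m:=|\mathcal{O}|$, one obtains $md=\val(\Gamma)=n-1$; the cover case is $d=1$, $m=n-1$, while the pseudocover case is $d\geqs 2$.

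For the necessity direction it then suffices to rule out $m=1$, since $d,m\geqs 2$ forces $n-1=md$ to be composite. Suppose for contradiction $m=1$, so $N(v)\subseteq C$ for a unique $C\neq B$. For any $w\in N(v)$, arc-transitivity produces $g\in G$ with $(v,w)^g=(w,v)$; this $g$ swaps $B$ and $C$, whence $N(w)=N(v)^g\subseteq C^g=B$. Applying the same swap argument to any $u\in N(w)\subseteq B$ yields $N(u)\subseteq C$, and a straightforward induction on walk-length shows that every vertex reachable from $v$ lies in $B\cup C$. Connectedness of $\Gamma$ then forces $|\mathcal{B}|=2$, hence $n=2$, contradicting $n\geqs 3$. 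Therefore $m\geqs 2$, and $n-1=md$ is composite, i.e.\ not prime.

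For the sufficiency, given $n-1=ab$ with $a,b\geqs 2$, the plan is to exhibit $\Gamma$ via Construction~\ref{cons:kab}: a connected arc-transitive graph whose natural imprimitive block system has $n$ blocks, yields $\K_n$ as quotient, and realises bipartite valency $d=a\geqs 2$. The criterion of Theorem~\ref{thm:pscover} then supplies the group-theoretic check confirming that $\Gamma$ is a pseudocover rather than a cover. The main obstacle of the theorem lies in this sufficiency step: the construction must simultaneously ensure $2$-transitivity on the block set (so the quotient is genuinely $\K_n$), arc-transitivity on $\Gamma$, connectedness, and that the induced bipartite subgraph between two blocks is strictly larger than a perfect matching. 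Construction~\ref{cons:kab} and Theorem~\ref{thm:pscover} are tailored to accomplish this balancing act, so the remaining verification reduces to unpacking their hypotheses for the specific $a$ and $b$.
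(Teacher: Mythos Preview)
Your proposal is correct and follows essentially the same strategy as the paper. For the ``if'' direction you defer to Construction~\ref{cons:kab} and the criterion of Theorem~\ref{thm:pscover}, exactly as the paper does (via Lemmas~\ref{lem:kabprop}--\ref{lem:kabAn} and Lemma~\ref{cons:(L,g)}). For the ``only if'' direction, the paper argues tersely that prime valency forces $G_v$ to be primitive on $\Gamma(v)$, whence the block-partition of $\Gamma(v)$ is trivial and any connected extender of equal valency must be a cover; your factorisation $n-1=md$ together with the connectedness argument ruling out $m=1$ is simply an explicit unpacking of that same observation, so the two treatments coincide in substance.
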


We remark that if a $G$-arc-transitive graph $\Gamma$ has a prime valency, then $G_\beta$ is primitive on $\Gamma(\beta)$ for vertex $\beta$ in $\Gamma$.
Note that a connected graph $\Gamma$ is a $G$-arc-transitive cover of its quotient $\Gamma_\mathcal{B}$ if and only if $\Gamma$ and $\Gamma_\mathcal{B}$ have the same valency, see Section~\ref{sec:prel}.

Theorem~\ref{thm:completecover} provides a tool to classify faithful $G$-arc-transitive covers of $\K_n$ with valency $n-1$ for $G\leqslant\Aut\K_n=\S_n$.
We give explicit classifications of $G$-arc-transitive covers and pseudocovers of $\K_n$ with $G\leqslant\Aut\K_n=\S_n$ for $n=4$ and $5$, see Corollary~\ref{coro:exk4k5}.
However, it is hard to determine all such covers and pseudocovers of complete graphs $\K_n$ for large integers $n$, which leads us to propose the following problem.

\begin{problem}\label{problem:cover}
	Let $\Sigma=\K_n$ with $n\geqslant 6$.
	Classify connected faithful $G$-arc-transitive covers and pseudocovers of $\Sigma$ for each $2$-transitive permutation group $G\leqslant \S_n$.
\end{problem}

\section{Coset graph representation and quotient graphs}\label{sec:prel}

The \textit{coset graph representation} is an important notion for studying symmetric graphs and is the main method of the constructions of symmetric graphs in this paper.
For an abstract group $G$, a subgroup $H<G$ and an element $g\in G\setminus H$ such that $g^2\in H$, one can define a graph $\Gamma=(V,E)$, called a \textit{coset graph} of $G$ and denoted by $\Cos(G,H,HgH)$, such that
\[\begin{array}{l}
	V=[G:H]=\{Hx\mid x\in G\},\\
	E=\bigl\{\{Hx,Hy\}\mid yx^{-1}\in HgH\bigr\}.
\end{array}\]
Then $G$ induces a group of automorphisms of $\Gamma$ by the coset action of $G$ on $[G:H]$.
We state some basic properties of coset graphs blow, the reader may refer to~\cite[Section 2]{Li2022cover}.

\begin{proposition}\label{prop:coset-graph}
	Suppose that $\Sigma$ is a $G$-arc-transitive graph with $G\leqs\Aut\Sigma$ and $\omega$ is a vertex in $\Sigma$.
	Then $\Sigma=\Cos(G,H,HgH)$, where $H=G_\omega$ and $g^2 \in H$ such that
	\begin{enumerate}
		\item $\Sigma$ has valency equal to $\bigl|H:H\cap H^g\bigr|$;
		\item the action of $H=G_\omega$ on the neighborhood $\Sigma(\omega)$ is equivalent to $H$ acting on $[H:H\cap H^g]$ by right multiplication;
		\item $\Sigma$ is connected if and only if $G=\langle H,g\rangle$.
	\end{enumerate}
\end{proposition}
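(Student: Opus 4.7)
The plan is to realize $\Sigma$ as a coset graph via a canonical $G$-equivariant bijection $V(\Sigma)\to[G:H]$ and then read off each of (i), (ii), (iii) from orbit--stabilizer considerations together with the definition of $\Cos(G,H,HgH)$.

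First I would fix the identification $\varphi\colon [G:H]\to V(\Sigma)$, $Hx\mapsto\omega^x$. Since $H=G_\omega$, $\varphi$ is a well-defined bijection, and it is $G$-equivariant when $G$ acts on $[G:H]$ by right multiplication. Using arc-transitivity of $G$, I would choose a neighbor $\omega'$ of $\omega$ together with an element $g\in G$ satisfying $(\omega,\omega')^g=(\omega',\omega)$; then $\omega^g=\omega'$ and $g^2\in G_\omega=H$, which meets the hypotheses required by the coset graph construction.

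To identify the edge sets, by $G$-equivariance of $\varphi$ it suffices to match the neighbors of $\omega$ with the neighbors of the trivial coset $H$. Arc-transitivity gives $\Sigma(\omega)=(\omega')^{H}=\{\omega^{gh}:h\in H\}$, and under $\varphi$ this corresponds to $\{Hgh:h\in H\}=\{Hy:y\in HgH\}$, which is exactly the neighborhood of $H$ in $\Cos(G,H,HgH)$. Hence $\Sigma\cong\Cos(G,H,HgH)$. Parts (i) and (ii) then follow from orbit--stabilizer: because $G_{\omega'}=(G_\omega)^g=H^g$, the stabilizer of $\omega'$ in $H$ equals $H\cap H^g$, so the $H$-action on $\Sigma(\omega)=(\omega')^H$ is equivalent to right multiplication of $H$ on $[H:H\cap H^g]$, yielding both $\val(\Sigma)=|H:H\cap H^g|$ and the equivalence of actions in~(ii).

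For (iii), the key step is to show that $\langle HgH\rangle=\langle H,g\rangle$ as subgroups of $G$. One inclusion is immediate; for the other, using $g^2\in H$ I obtain $g^{-1}=g\cdot g^{-2}\in gH\subseteq HgH$, so every $h\in H$ satisfies $h=(hg)\cdot g^{-1}\in\langle HgH\rangle$. The connected component of $H$ in $\Cos(G,H,HgH)$ therefore equals $\{Hx:x\in\langle HgH\rangle\}=\{Hx:x\in\langle H,g\rangle\}$, and this exhausts $[G:H]$ precisely when $G=\langle H,g\rangle$. I do not expect any single step to be a real obstacle, since the proposition is a standard translation lemma; the only bookkeeping to watch is the left/right coset convention, which is pinned down once $\varphi$ is chosen.
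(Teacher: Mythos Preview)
Your argument is correct and is the standard derivation of these facts: the $G$-equivariant bijection $Hx\mapsto\omega^{x}$ identifies $\Sigma$ with $\Cos(G,H,HgH)$, the orbit--stabilizer computation $H_{\omega'}=H\cap G_{\omega'}=H\cap H^{g}$ yields (i) and (ii), and your verification that $\langle HgH\rangle=\langle H,g\rangle$ (using $g^{-1}\in gH\subseteq HgH$) gives (iii). There is nothing to fix.

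Note, however, that the paper itself does not supply a proof of this proposition at all: it is stated as background and the reader is referred to \cite[Section~2]{Li2022cover}. So there is no ``paper's own proof'' to compare against; your write-up simply fills in what the paper leaves to the reference, along the expected lines.
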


For complete graph $\Sigma=\K_n$ with vertex set $\Omega$, the automorphism group of $\Sigma$ is the symmetric group $\S_n$ on $\Omega$.
It is clear that $\Sigma$ is symmetric and $\Aut\Sigma$ acts transitively on $2$-arcs of $\Sigma$.
Note that each ordered pair $(\omega,\omega')$ in $\Omega$ with $\omega\neq\omega'$ is an arc of $\Sigma$.
Thus if $G\leqs\Aut\Sigma$ acts transitively on arcs of $\Sigma$, then $G$ is a $2$-transitive subgroup of $\Aut\Sigma=\S_n$.
\begin{proposition}\label{prop:complete}
	Suppose that $\Sigma=\K_n$ is a complete graph for $n\geqs 3$ with vertex set $\Omega$.
	Then $\Sigma=\Cos(G,H,HgH)$ where $G\leqslant \S_n$ is a $2$-transitive group on $\Omega$, $H=G_\omega$ for $\omega\in\Omega$, $\omega^g\neq \omega$ and $\omega^{g^2}=\omega$.

	Conversely, a coset graph $\Sigma=\Cos(G,H,HgH)$ is a complete graph if and only if $G$ acts $2$-transitively on $[G:H]$.
\end{proposition}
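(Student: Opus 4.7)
The plan is to deduce both directions of the equivalence directly from Proposition~\ref{prop:coset-graph} together with the classical correspondence between the double cosets $H \backslash G / H$ and the orbits of $H$ on $[G:H]$.

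For the forward direction, I would first observe that since every ordered pair of distinct vertices of $\K_n$ forms an arc, a subgroup $G \leqs \S_n$ is arc-transitive on $\Sigma$ precisely when it is $2$-transitive on $\Omega$. Applying Proposition~\ref{prop:coset-graph} with $H = G_\omega$ then yields $\Sigma = \Cos(G, H, HgH)$ for some $g \notin H$ with $g^2 \in H$, and it remains to translate these two algebraic conditions on $g$ back to the action on $\Omega$: the condition $g \notin H$ becomes $\omega^g \neq \omega$, while $g^2 \in H$ becomes $\omega^{g^2} = \omega$.

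For the converse direction, the key step is to identify the neighborhood of the vertex $H$ in $\Cos(G,H,HgH)$ as the single $H$-orbit $\{Hx : x \in HgH\}$ on the vertex set $[G:H]$. Since the $H$-orbits on $[G:H]$ are in bijection with the double cosets in $H \backslash G / H$, the coset graph is complete exactly when $HgH = G \setminus H$, equivalently when $H\backslash G/H$ consists of only two double cosets, equivalently when $H$ has exactly two orbits on $[G:H]$. By the standard criterion for $2$-transitivity (a transitive action is $2$-transitive iff the point stabilizer has a single orbit on the remaining points), this last condition is precisely $2$-transitivity of $G$ on $[G:H]$.

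I do not anticipate any serious obstacle: the proposition is essentially a repackaging of the orbit/double-coset dictionary in the language of coset graphs. The only mild subtlety is the bookkeeping between the given action of $G$ on $\Omega$ and the equivalent right-multiplication action on $[G:G_\omega]$, which is handled by the standard orbit-stabilizer correspondence.
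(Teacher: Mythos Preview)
Your proposal is correct and matches the paper's treatment. In the paper, Proposition~\ref{prop:complete} is stated without a formal proof: the paragraph preceding it observes that arc-transitivity on $\K_n$ is exactly $2$-transitivity on $\Omega$, after which the proposition is meant to follow directly from Proposition~\ref{prop:coset-graph}. Your argument spells out precisely this, and your handling of the converse via the double-coset/$H$-orbit correspondence is the standard elaboration the paper leaves implicit.
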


By definition, a $G$-arc-transitive extender of $\Sigma$ is a $G$-arc-transitive graph, and so can be presented as a coset graph.
The following proposition identifies such coset graphs.

\begin{proposition}\label{prop:extender}
	Let $\Gamma$ and $\Sigma$ be symmetric graphs.
	Then $\Gamma$ is a $G$-arc-transitive extender of $\Sigma$ if and only if $\Gamma=\Cos(G,L,LgL)$ and $\Sigma=\Cos(G,H,HgH)$, where $L<H$ and $g\in G\setminus L$ such that $g^2\in L$.
\end{proposition}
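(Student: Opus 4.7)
The plan is to prove both directions by applying Proposition~\ref{prop:coset-graph} to $\Gamma$ and to $\Sigma$ with a \emph{common} edge-reversing element $g$, chosen so that the quotient structure is visible at the level of coset-graphs.

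For the forward direction, suppose $\Gamma$ is a $G$-arc-transitive extender of $\Sigma$ with block system $\mathcal{B}$. I would fix a vertex $\alpha\in V(\Gamma)$, let $B\in\mathcal{B}$ be its block, and set $L=G_\alpha$ and $H=G_B$; then $L\leqs H$, and in fact $L<H$ since $B$ is a non-singleton block. Because $\Sigma$ has an edge and $G$ is arc-transitive on $\Gamma$, some neighbor $\alpha'$ of $\alpha$ must lie outside $B$; arc-transitivity of $G$ then supplies an element $g\in G$ swapping $\alpha$ and $\alpha'$, giving $g^2\in G_\alpha\cap G_{\alpha'}\subseteq L$ and $g\notin H$. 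Proposition~\ref{prop:coset-graph} applied to $\Gamma$ yields $\Gamma=\Cos(G,L,LgL)$. The same proposition applied to the induced arc-transitive action of $G$ on $\Sigma$, with vertex stabilizer $H$ and the arc $(B,B^g)$ reversed by the same $g$, gives $\Sigma=\Cos(G,H,HgH)$.

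For the converse, I would build the block system directly from the coset data. The projection $[G:L]\to[G:H]$, $Lx\mapsto Hx$, is well-defined because $L<H$ and is $G$-equivariant under right multiplication, so its fibres $B_x:=\{Lhx:h\in H\}$ form a non-trivial $G$-invariant partition $\mathcal{B}$ of $V(\Gamma)$. To verify $\Gamma_\mathcal{B}=\Sigma$, I would check that $B_x$ and $B_y$ are adjacent in $\Gamma_\mathcal{B}$ iff some pair of their representatives is adjacent in $\Gamma$, i.e.\ iff $h'y(hx)^{-1}\in LgL$ for some $h,h'\in H$; absorbing $h,h'$ into $L$ using $L<H$ collapses this to $yx^{-1}\in HgH$, which is precisely the adjacency rule of $\Cos(G,H,HgH)=\Sigma$.

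The only step requiring some care is the choice of $g$ in the forward direction: it must reverse an edge of $\Gamma$ \emph{and} send $\alpha$ out of its own block $B$, so that the same $g$ also realises $\Sigma$ as a coset graph via Proposition~\ref{prop:coset-graph}. This is secured by the observation that $\Sigma$ has at least one edge, forcing some neighbor of $\alpha$ in $\Gamma$ to live in a different block. After this, everything reduces to the routine double-coset bookkeeping indicated above.
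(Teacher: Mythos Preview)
The paper does not actually prove Proposition~\ref{prop:extender}; it is stated as one of the ``basic properties of coset graphs'' with a reference to \cite[Section~2]{Li2022cover}, so there is no in-paper argument to compare against. Your proposal is the standard proof and is correct in both directions.

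Two small points of care. First, when you invoke Proposition~\ref{prop:coset-graph} for $\Sigma$, that proposition is stated under the hypothesis $G\leqslant\Aut\Sigma$, whereas $G$ may act on $\mathcal{B}$ with a nontrivial kernel $N$. This is harmless: one applies Proposition~\ref{prop:coset-graph} to $G/N$ and then notes that $\Cos(G/N,H/N,(H/N)gN(H/N))$ and $\Cos(G,H,HgH)$ are canonically isomorphic since $N\leqslant H$; but it is worth saying explicitly. Second, in the converse direction your phrase ``absorbing $h,h'$ into $L$ using $L<H$'' is a little loose. What you are really doing is taking the union over $h,h'\in H$: the condition $\exists\,h,h'\in H$ with $h'yx^{-1}h^{-1}\in LgL$ is equivalent to $yx^{-1}\in\bigcup_{h,h'\in H}(h')^{-1}LgLh=HLgLH=HgH$, the last equality because $L\leqslant H$. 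With these clarifications the argument is complete.
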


Suppose that $\Gamma$ is a $G$-arc-transitive extender of $\Gamma_\mathcal{B}$ with respect to block system $\mathcal{B}$ of $G$.
Let $(\beta,\gamma)$ is an arc in $\Gamma$ and $\beta\in B$ for $B\in\mathcal{B}$.
Observe that the blocks with some vertices adjacent to $\beta$, admit a block system of $\Gamma(\beta)$.
Hence if $\Gamma$ is \textit{$G$-locally-primitive}, that is, $G_\beta$ is primitive on $\Gamma(\beta)$, then $\Gamma$ is a pseudocover of $\Gamma_\mathcal{B}$ only when $\Gamma$ is not connected.
In fact, each symmetric graph has a disconnected symmetric pseudocover (see~\cite[Proposition 2.6]{Li2022cover} and Figure~\ref{fig:disconnected}).
\begin{figure}[!ht]
	\begin{center}
		\begin{tabular}{cc}
			\includegraphics[scale=0.8]{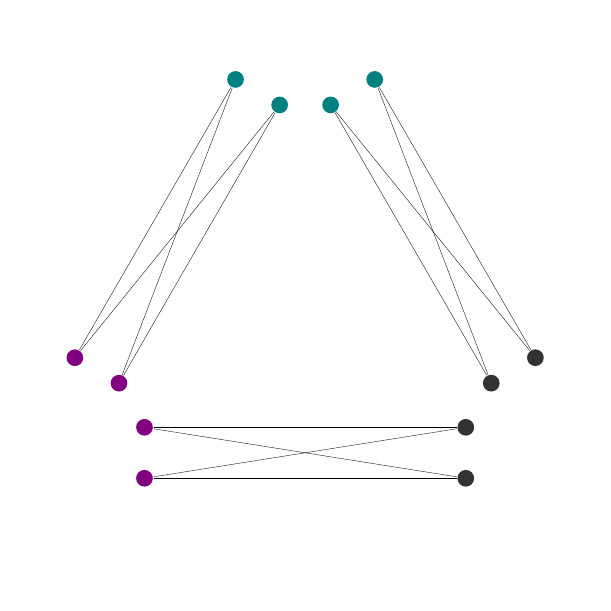}
			&
			\includegraphics[scale=0.8]{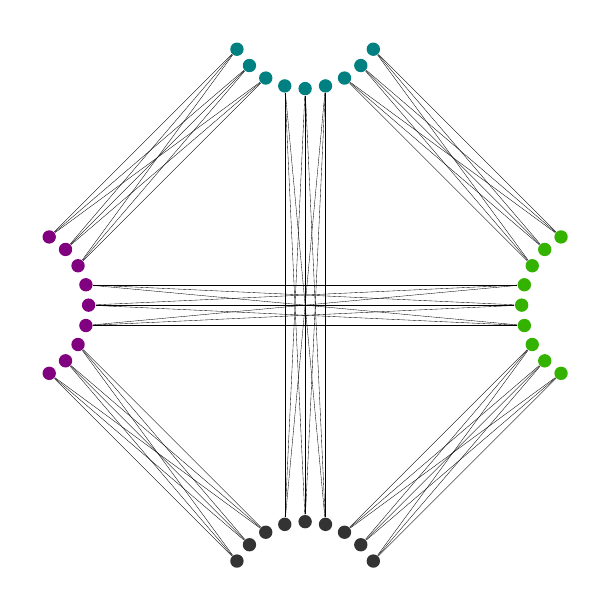}
		\end{tabular}
	\end{center}
	\caption{Disconnected pseudocovers of $\K_3$ and $\K_4$}\label{fig:disconnected}
\end{figure}

Li and the author provide a criterion for deciding an extender to be a cover or a pseudocover in~\cite{Li2022cover} and we state it blow.

\begin{theorem}\label{thm:pscover}
	Let $\Gamma$ be a $G$-arc-transitive extender of $\Gamma_\mathcal{B}$ such that $\val(\Gamma)=\val(\Gamma_{\mathcal{B}})$.
	Let $(\alpha,\beta)$ be an arc of $\Gamma$ and let $A,B\in\mathcal{B}$ such that $\alpha\in A$ and $\beta\in B$.
	Then the followings are equivalent:
	\begin{enumerate}
		\item $\Gamma$ is a $G$-arc-transitive pseudocover of $\Gamma_{\mathcal{B}}$;

		\item $G_\alpha$ is an intransitive subgroup of $G_{A}$ on $\Gamma_\mathcal{B}(A)$;

		\item $G_{\alpha B}=G_{\alpha}\cap G_B$, $G_{A\beta}=G_{A}\cap G_\beta$ and $G_{\alpha\beta}$ are pairwise different.
	\end{enumerate}
\end{theorem}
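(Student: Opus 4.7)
The plan is to introduce two combinatorial invariants of the bipartite subgraph $\Gamma[A,B]$---namely $m$, the number of blocks in $\Gamma_\mathcal{B}(A)$ meeting $\Gamma(\alpha)$, and $k := |\Gamma(\alpha)\cap B|$---and then translate each of (i), (ii), (iii) into a condition on $k$.

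The first step is to record the orbit--index identities $|\Gamma_\mathcal{B}(A)| = |G_A : G_{AB}|$, $|\Gamma(\alpha)| = |G_\alpha : G_{\alpha\beta}|$, $m = |G_\alpha : G_{\alpha B}|$ and $k = |G_{\alpha B} : G_{\alpha\beta}|$. These follow from arc-transitivity on $\Gamma$ and $\Gamma_\mathcal{B}$, together with the observation that $\alpha\in A$ forces $G_\alpha \leqslant G_A$ (so in particular $G_{\alpha B} = G_\alpha \cap G_B$, as stated in (iii)). I would point out that the blocks hit by $\Gamma(\alpha)$ form the single $G_\alpha$-orbit $B^{G_\alpha}$---any $g\in G_\alpha$ with $\beta^g = \beta'$ satisfies $B^g = B' \ni \beta'$---and that $G_{\alpha B}$ acts transitively on $\Gamma(\alpha)\cap B$. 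Swapping $(\alpha,\beta)$ with $(\beta,\alpha)$ via an element of $G$ supplied by arc-transitivity then yields the dual identity $k = |\Gamma(\beta)\cap A| = |G_{A\beta} : G_{\alpha\beta}|$.

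For (i)$\Leftrightarrow$(ii), arc-transitivity makes $\Gamma[A',B']$ isomorphic to $\Gamma[A,B]$ for every arc $(A',B')$ of $\Gamma_\mathcal{B}$, so $\Gamma$ is a cover precisely when $k=1$; combined with the valency hypothesis this forces $m=|\Gamma_\mathcal{B}(A)|$, i.e.\ $G_\alpha$ is transitive on $\Gamma_\mathcal{B}(A)$. A pseudocover has $k\geqslant 2$, and the inequality $mk \leqslant |\Gamma(\alpha)| = |\Gamma_\mathcal{B}(A)|$ then gives $m<|\Gamma_\mathcal{B}(A)|$, i.e.\ intransitivity. For (ii)$\Leftrightarrow$(iii), the two identities $k = |G_{\alpha B}:G_{\alpha\beta}| = |G_{A\beta}:G_{\alpha\beta}|$ show that $k=1$ is equivalent to each of $G_{\alpha B} = G_{\alpha\beta}$ and $G_{A\beta} = G_{\alpha\beta}$; in the pseudocover case $k>1$ both inequalities hold, and picking $g\in G_{\alpha B}$ with $\beta^g\neq\beta$---available since $G_{\alpha B}$ is transitive on a set of size $k$---supplies an element of $G_{\alpha B}$ outside $G_\beta \supseteq G_{A\beta}$, so that $G_{\alpha B}\neq G_{A\beta}$. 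The converse is immediate: pairwise distinctness yields $G_{\alpha B}\neq G_{\alpha\beta}$, hence $k>1$, hence (ii).

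The main obstacle I anticipate is the first step, specifically verifying that the valency hypothesis combined with arc-transitivity really does propagate the perfect-matching property from the single pair $(A,B)$ to every arc of $\Gamma_\mathcal{B}$, so that the single invariant $k$ genuinely controls whether $\Gamma$ is a cover; at the same time one must secure the symmetric version $k = |G_{A\beta}:G_{\alpha\beta}|$ which is essential for separating $G_{A\beta}$ from $G_{\alpha\beta}$ without extra hypotheses. Once these bookkeeping reductions are in place, each implication in the theorem collapses to a one-line comparison of indices.
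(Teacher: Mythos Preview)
The paper does not supply its own proof of this theorem: it is quoted verbatim from the companion paper \cite{Li2022cover} (see the sentence immediately preceding the theorem, ``Li and the author provide a criterion \ldots and we state it below''), and is used throughout as a black box. Hence there is no in-paper argument to compare against.

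That said, your proposed argument is correct and is essentially the standard one. The orbit--stabiliser identities you record do give the exact equality $mk=|\Gamma(\alpha)|$ (not merely $mk\leqslant|\Gamma(\alpha)|$), since the blocks meeting $\Gamma(\alpha)$ form a genuine block system for the transitive action of $G_\alpha$ on $\Gamma(\alpha)$; with this in hand the equivalences (i)$\Leftrightarrow$(ii)$\Leftrightarrow$(iii) reduce, as you say, to the dichotomy $k=1$ versus $k\geqslant 2$. Your use of an arc-reversing element to obtain the dual identity $k=|G_{A\beta}:G_{\alpha\beta}|$ is the right way to separate $G_{A\beta}$ from $G_{\alpha\beta}$, and the observation that $G$-arc-transitivity on $\Gamma$ forces all bipartite pieces $\Gamma[A',B']$ to be isomorphic handles the ``propagation'' concern you flag. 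Nothing is missing.
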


A typical case is that $\mathcal{B}$ is a block system consisting of orbits of a normal subgroup $N$ of $G$.
In this case, we say the quotient graph $\Gamma_\mathcal{B}$ is a \textit{normal quotient} of $\Gamma$, denoted by $\Gamma_N$.
In this case, $\Gamma$ is a multicover of $\Gamma_N$ and $\val(\Gamma_N)$ divides $\val(\Gamma)$.
In particular, we say $\Gamma$ is a \textit{normal cover} of $\Gamma_N$ if $\Gamma$ is a cover of $\Gamma_N$, equivalently $\val(\Gamma)=\val(\Gamma_N)$.
The following lemma shows that a symmetric cover is a normal cover of a faithful cover.

\begin{lemma}\label{lem:faith}
    Suppose that $\Gamma$ is a $G$-arc-transitive cover of $\Sigma=\Gamma_\mathcal{B}$.
    Let $N$ be the kernel of $G$ acting on $\mathcal{B}$.
    Then $\Gamma$ is a normal cover of $\Gamma_N$ and $\Gamma_N$ is a faithful $G/N$-arc-transitive cover of $\Sigma$.
\end{lemma}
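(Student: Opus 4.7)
The plan is to route the argument through the block system $\mathcal{B}_N$ on $V(\Gamma)$ consisting of the $N$-orbits. Since $N \trianglelefteq G$, this partition is automatically $G$-invariant, and since $N$ is by definition the kernel on $\mathcal{B}$ (so fixes every block of $\mathcal{B}$ setwise), each $N$-orbit is contained in a single block of $\mathcal{B}$. Thus $\mathcal{B}_N$ refines $\mathcal{B}$ and I get a chain of quotient maps $\Gamma \to \Gamma_N \to \Sigma$ whose vertex-set maps are compatible with the respective partitions.

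The main work is then to establish $\val(\Gamma) = \val(\Gamma_N) = \val(\Sigma)$, from which both cover assertions fall out. I would fix an edge $\{\alpha,\beta\}$ of $\Gamma$ with $\alpha \in A \in \mathcal{B}$ and $\beta \in B \in \mathcal{B}$. Because $\Gamma$ is a cover of $\Sigma$, the bipartite graph $\Gamma[A,B]$ is a perfect matching, so $\beta$ is the unique $\Gamma$-neighbor of $\alpha$ in $B$. For any $n \in N$, both $A$ and $B$ are fixed setwise, so $\beta^n$ is the unique neighbor of $\alpha^n$ in $B$; that is, the matching $\Gamma[A,B]$ is $N$-equivariant. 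It therefore carries the $N$-orbit $[\alpha]_N \subseteq A$ bijectively onto a single $N$-orbit $[\beta]_N \subseteq B$. Summing over the $\val(\Sigma)$ blocks of $\mathcal{B}$ adjacent to $A$ yields exactly $\val(\Sigma)$ distinct $N$-orbits adjacent to $[\alpha]_N$ in $\Gamma_N$ (they lie in pairwise distinct blocks, hence are distinct), and the bipartite graphs $\Gamma[[\alpha]_N,[\beta]_N]$ and $\Gamma_N[A,B]$ are perfect matchings. This gives $\val(\Gamma) = \val(\Gamma_N) = \val(\Sigma)$, so $\Gamma$ is a normal cover of $\Gamma_N$ and $\Gamma_N$ is a cover of $\Sigma$.

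To finish, $N$ is by construction the kernel of $G$ on $\mathcal{B} = V(\Sigma)$, so $G/N$ acts faithfully on $\mathcal{B}$, i.e.\ $G/N \leqs \Aut\Sigma$; and the $G$-equivariance of the quotient map $\Gamma \to \Gamma_N$ transports arc-transitivity of $G$ on $\Gamma$ to arc-transitivity of $G/N$ on $\Gamma_N$. Combined with the previous paragraph, $\Gamma_N$ is a faithful $G/N$-arc-transitive cover of $\Sigma$. The one point that needs a moment of care is the middle step: one must check that the image of an $N$-orbit under $\Gamma[A,B]$ is a single $N$-orbit, not merely an $N$-invariant union of orbits. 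This is forced by the $N$-equivariance of the matching, which itself relies only on $N$ fixing both $A$ and $B$ setwise, so I do not expect any real obstacle here.
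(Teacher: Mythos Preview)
Your argument is correct, but it proceeds quite differently from the paper's. The paper works with stabilizer indices: it compares $\val(\Gamma_N)=|G_{\bar\alpha}:G_{\bar\alpha\bar\beta}|$ with $|G_{\bar\alpha}:G_{\bar\alpha B}|$, invokes Theorem~\ref{thm:pscover} to see that $G_\alpha$ (hence $G_{\bar\alpha}=NG_\alpha$) is transitive on $\Sigma(A)$, and then sandwiches $\val(\Gamma_N)$ between $\val(\Sigma)$ and $\val(\Gamma)$, using the divisibility $\val(\Gamma_N)\mid\val(\Gamma)$ to force equality. You instead argue combinatorially from the perfect-matching definition: the $N$-equivariance of the matching $\Gamma[A,B]$ (a direct consequence of $N$ fixing $A$ and $B$ setwise) shows that $N$-orbits in $A$ are carried bijectively to $N$-orbits in $B$, giving both perfect-matching conditions at once. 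Your route is more self-contained in that it does not appeal to Theorem~\ref{thm:pscover}, and it makes the geometry of the two covers visible; the paper's route is shorter on the page and illustrates how the cover/pseudocover criterion is meant to be used. The point you flag at the end---that the image of an $N$-orbit under the matching is a \emph{single} $N$-orbit---is indeed immediate from equivariance of a bijection, so there is no hidden obstacle.
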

\begin{proof}
    Suppose that $(\alpha,\beta)$ is an arc in $\Gamma$ and $A,B\in\mathcal{B}$ such that $\alpha\in A$ and $\beta\in B$.
	Let $\overline{\alpha}$ and $\overline{\beta}$ be vertices in $\Gamma_N$ corresponding to blocks (induced by $N$) containing $\alpha$ and $\beta$ respectively.
	Then $(A,B)$ is an arc in $\Sigma$ and $(\overline{\alpha},\overline{\beta})$ is an arc in $\Gamma_N$.
	Notice that $G_{\bar{\beta}}=NG_{\beta}\leqs G_B$, then $G_{\bar{\alpha}\bar{\beta}}\leqs G_{\overline{\alpha}B}$, and hence
	\[\val(\Gamma_N)=|G_{\overline{\alpha}}:G_{\bar{\alpha}\bar{\beta}}|\geqs |G_{\overline{\alpha}}:G_{\overline{\alpha}B}|.\]
	By equivalence between part~(i) and (ii) of Theorem~\ref{thm:pscover}, $G_{\alpha}$ is transitive on $\Sigma(A)$.
	Then $G_{\overline{\alpha}}=G_{\alpha} N$ has an orbit $\Sigma(A)$ of the action on $\mathcal{B}=V\Sigma$ with kernel $N$ and a stabilizer $G_{\overline{\alpha}B}$. 
	Thus we obtain that 
	\[\val(\Gamma_N)\geqs |G_{\overline{\alpha}}:G_{\overline{\alpha}B}|=|\Sigma(A)|=\val(\Sigma).\]
	Since $\Gamma$ is a cover of $\Sigma$, it follows that $\val(\Gamma_N)\geqs\val(\Sigma)=\val(\Gamma)$.
	Hence $\val(\Gamma_N)=\val(\Gamma)$ as $\val(\Gamma)$ is divisible by $\val(\Gamma_N)$.
	Thus $\Gamma$ is a normal cover of $\Gamma_N$.
	Since $G_{\overline{\alpha}}$ is transitive on $\Sigma(A)$ and $\val(\Gamma_N)=\val(\Sigma)$, $\Gamma_N$ is faithful a cover of $\Sigma$ by Theorem~\ref{thm:pscover}.
	The induced permutation group of $G$ on $\mathcal{B}$ is $G^\mathcal{B}=G/N$, then $\Gamma_N$ is a faithful $G/N$-arc-transitive cover of $\Sigma$. 
\end{proof}

\section{Symmetric covers of complete graphs}\label{sec:cover}\

Suppose that $\Gamma$ is a $G$-arc-transitive extender of complete graph $\Sigma=\K_n$.
With Theorem~\ref{thm:pscover} and Propositions~\ref{prop:extender}, \ref{prop:complete}, we immediately obtain the following lemma.
\begin{lemma}\label{lem:complete}
	Suppose that $\Gamma$ is a $G$-arc-transitive extender of complete graph $\Sigma=\K_n$. 
	Let $\Omega$ be the vertex set of $\Sigma$ and let $\omega,\omega'$ be two vertices in $\Omega$.
	Then $G^\Omega$ is a $2$-transitive subgroup of $\S_n$ and
	\[\Gamma=\Cos(G,L,LgL),\ \ \ \Sigma=\Cos(G,H,HgH),\]
	where $H=G_\omega$, $L<H$ and $(\omega,\omega')^g=(\omega',\omega)$.
\end{lemma}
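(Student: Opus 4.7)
The plan is to combine Propositions~\ref{prop:extender} and~\ref{prop:complete}, and then construct the particular coset-graph data aligned with the prescribed pair $(\omega,\omega')$ from a carefully chosen arc of $\Gamma$. By Proposition~\ref{prop:extender}, any $G$-arc-transitive extender $\Gamma$ of $\Sigma$ admits coset-graph presentations of the shape $\Gamma=\Cos(G,L,LgL)$ and $\Sigma=\Cos(G,H,HgH)$ with $L<H$ and $g^2\in L$. Since $\Sigma=\K_n$ is a complete graph, Proposition~\ref{prop:complete} then forces $G$ to act $2$-transitively on $[G:H]$, and under the identification $\Omega\cong[G:H]$ this already yields the first assertion that $G^\Omega\leqs\S_n$ is $2$-transitive.

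To secure the additional conditions $H=G_\omega$ and $(\omega,\omega')^g=(\omega',\omega)$ for the specific pair in the statement, I would not take the presentation furnished abstractly by Proposition~\ref{prop:extender}, but instead build one tailored to $(\omega,\omega')$. Let $\mathcal{B}$ be the block system realising $\Sigma=\Gamma_\mathcal{B}$, and under $\Omega\cong\mathcal{B}$ let $A,B\in\mathcal{B}$ be the blocks corresponding to $\omega$ and $\omega'$. Because $\omega\neq\omega'$ are adjacent in $\K_n$, the bipartite subgraph $\Gamma[A,B]$ is non-empty, so I may pick an arc $(\alpha,\beta)$ of $\Gamma$ with $\alpha\in A$ and $\beta\in B$. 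Arc-transitivity of $\Gamma$ then supplies $g\in G$ reversing $(\alpha,\beta)$. Setting $H:=G_A=G_\omega$ and $L:=G_\alpha$, one has $L\leqs H$, $g^2\in G_\alpha=L$, and $(\omega,\omega')^g=(A,B)^g=(B,A)=(\omega',\omega)$.

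The coset-graph identifications $\Gamma=\Cos(G,L,LgL)$ and $\Sigma=\Cos(G,H,HgH)$ for these specific choices of base vertex and arc-reversing element then follow from Proposition~\ref{prop:extender} (equivalently, from Proposition~\ref{prop:coset-graph} applied to the base arcs $(\alpha,\beta)$ of $\Gamma$ and $(\omega,\omega')$ of $\Sigma$). The strict inequality $L<H$ reflects non-triviality of the extender, so that blocks of $\mathcal{B}$ have size $>1$ and $G_\alpha$ is a proper subgroup of $G_A$. I do not anticipate a serious obstacle: the content of the lemma is essentially a translation of the structural hypothesis into the coset-graph language developed in Section~\ref{sec:prel}, and the only modest point is the freedom to select the base arc of $\Gamma$ above the prescribed pair $(\omega,\omega')$, which is immediate from the extender structure together with $G$-arc-transitivity of $\Gamma$.
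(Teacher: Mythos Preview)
Your proposal is correct and follows essentially the same approach as the paper: invoke Proposition~\ref{prop:complete} for $2$-transitivity and Proposition~\ref{prop:extender} for the coset-graph presentations, then align the data with the prescribed pair $(\omega,\omega')$. The only cosmetic difference is that the paper first takes an arbitrary presentation and then conjugates by an element $h\in H$ to force $\omega^{g}=\omega'$, whereas you build the presentation directly from an arc $(\alpha,\beta)$ lying over $(\omega,\omega')$; both devices exploit exactly the same freedom in the choice of base arc.
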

\begin{proof}
	Since $\Sigma=\K_n$ is a complete graph, $G\leqs\S_n$ is $2$-transitive on $\Omega$ by Proposition~\ref{prop:complete}.
	With Proposition~\ref{prop:extender}, we have $\Gamma=\Cos(G,L,LgL)$ and $\Sigma=\Cos(G,H,HgH)$ with $H$ a stabilizer of some vertex in $\Omega$, $L<H$ and $g^2\in L$.
	Since the vertex stabilizers of $G$ on $\Sigma$ are conjugate, we shall assume that $H=G_\omega$.
	Then  $\omega^g\neq \omega$ and $\omega^{g^2}=\omega$.

	Note that $H$ is transitive on $\Omega\setminus\{\omega\}$, there exists an element $h\in H$ such that $\omega^{g}=\omega'^h$.
	Since $\Cos(G,L,LgL)=\Cos(G,L^g,(LgL)^g)$, we may replace $L$ and $g$ by $L^h$ and $g^h$ respectively.
	Thus $(\omega,\omega')^g=(\omega',\omega)$ in this case, which completes the proof.
\end{proof}

Assume further $\Gamma$ is a faithful symmetric cover of $\Sigma$.
By the equivalence between part~(i) and (ii) of Theorem~\ref{thm:pscover}, $L< G_\omega$ is transitive on $\Sigma(\omega)=\Omega\setminus\{\omega\}$.
Hence We observe that for any element $g\in G$ which does not fix $\omega$, the group $\langle L,g\rangle$ must be $2$-transitive.

Therefore, the above disscussion leads to the following generic construction of covers of complete graphs.

\begin{construction}\label{cons:coversn}
	Let $\Omega$ be a set of size $n$, and $X=\S_n$ the symmetric group on $\Omega$.
	Fix two points $\omega,\omega'\in\Omega$.
	Let $L$ be a transitive subgroup of $X_\omega$ on $\Omega\setminus\{\omega\}$.
	Choose $g_0\in \mathbf{N}_{X_{\omega\omega'}}(L_{\omega'})$ such that $g_0^2\in L$.
	Set $g=(\omega\omega')g_0$, $G=\langle L,g\rangle$ and $H=G_\omega$.
	Define
	\[\Gamma=\Cos(G,L,LgL),\ \ \ \Sigma=\Cos(G,H,HgH)\]
\end{construction}

\begin{lemma}\label{lem:coversn}
	With the notations defined in Construction~\ref{cons:coversn}, we have
	\begin{enumerate}
		\item $\Sigma\cong\K_n$, a $G$-arc-transitive complete graph with $n$ vertices;

		\item $\Gamma$ is a connected $G$-arc-transitive cover of $\Sigma$;
			
		\item If $g_0=1$, then $g=(\omega\omega')$ is a transposition and $G=\S_n$.
	\end{enumerate}
\end{lemma}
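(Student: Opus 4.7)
Part~(i) is immediate from the hypotheses on $L$ and $g_0$. Since $L\leqs X_\omega$ is transitive on $\Omega\setminus\{\omega\}$ and $L\leqs G$, the stabilizer $G_\omega\supseteq L$ is transitive on $\Omega\setminus\{\omega\}$; because $g_0$ fixes both $\omega$ and $\omega'$, the element $g=(\omega\omega')g_0$ sends $\omega$ to $\omega'$, so $G$ is transitive on $\Omega$. Thus $G$ is $2$-transitive on $\Omega$ of size $n$, and Proposition~\ref{prop:complete} yields that $\Sigma=\Cos(G,H,HgH)$ is a $G$-arc-transitive complete graph $\K_n$.

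For part~(ii), the plan is to apply Proposition~\ref{prop:extender} and then verify the cover condition via valency. The elementary observation I will use repeatedly is that the transposition $(\omega\omega')$ centralizes $X_{\omega\omega'}$, since any permutation fixing $\omega$ and $\omega'$ pointwise commutes with swapping them. This immediately gives $g^2=(\omega\omega')^2 g_0^2=g_0^2\in L$, and together with $L<H$ and $g\notin L$ (both clear from the action on $\omega$), Proposition~\ref{prop:extender} makes $\Gamma$ a $G$-arc-transitive extender of $\Sigma$; connectedness follows from $G=\langle L,g\rangle$ via Proposition~\ref{prop:coset-graph}(iii). To see that $\Gamma$ is actually a cover, by Proposition~\ref{prop:coset-graph}(i) I must show that $|L:L\cap L^g|=n-1$. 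Since $L$ fixes $\omega$ and $L^g$ fixes $\omega'$, the containment $L\cap L^g\leqs L_{\omega'}$ is clear; the main point is the reverse inclusion, which reduces to showing that $g$ normalizes $L_{\omega'}$. Both hypotheses of the construction are needed here: $(\omega\omega')$ centralizes $L_{\omega'}\leqs X_{\omega\omega'}$ by the observation above, and $g_0$ normalizes $L_{\omega'}$ by assumption, so $g=(\omega\omega')g_0$ normalizes $L_{\omega'}$. Hence $L\cap L^g=L_{\omega'}$, and $|L:L_{\omega'}|=|\omega'^L|=n-1=\val(\Sigma)$ by transitivity of $L$ on $\Omega\setminus\{\omega\}$.

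Part~(iii) is a short corollary: if $g_0=1$, then $g=(\omega\omega')$ is a transposition, and by part~(i) the group $G=\langle L,g\rangle$ is $2$-transitive on $\Omega$; conjugating $(\omega\omega')$ by $2$-transitivity produces every transposition of $\S_n$, whence $G=\S_n$. The main obstacle in the argument is the computation of $L\cap L^g$ in part~(ii); once the normalization of $L_{\omega'}$ by $g$ is established, everything else follows from the coset-graph machinery of Section~\ref{sec:prel} and the $2$-transitivity of $G$.
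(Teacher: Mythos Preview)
Your proof follows essentially the same route as the paper's: the commutation of $(\omega\omega')$ with $X_{\omega\omega'}$ to obtain $g^2\in L$, the $2$-transitivity argument for part~(i), the identification $L\cap L^g=L_{\omega'}$ via the two inclusions, and the classical fact that a $2$-transitive group containing a transposition is the full symmetric group for part~(iii).

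There is, however, one genuine gap in part~(ii). You write: ``To see that $\Gamma$ is actually a cover, by Proposition~\ref{prop:coset-graph}(i) I must show that $|L:L\cap L^g|=n-1$.'' But equality of valencies only shows that $\Gamma$ is either a cover \emph{or a pseudocover} of $\Sigma$; it does not by itself decide which. To conclude that $\Gamma$ is a cover you must combine $\val(\Gamma)=\val(\Sigma)$ with the transitivity of $L$ on $\Sigma(\omega)=\Omega\setminus\{\omega\}$ (which you recorded in part~(i)) and invoke Theorem~\ref{thm:pscover}: since $L=G_\alpha$ is transitive on the neighbourhood $\Sigma(A)$, condition~(ii) there fails, so $\Gamma$ is not a pseudocover and hence is a cover. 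The paper's proof makes exactly this appeal explicit. All the ingredients are present in your argument; only the final inference is misstated.
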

\begin{proof}
	By definition, $g_0$ fixes both $\omega$ and $\omega'$, and so the transposition $(\omega\omega')$ and $g_o$ commute.
	Thus $g^2=(\omega\omega')^2g_0^2=g_0^2\in L$, and then coset graph $\Gamma$ is undirected with $\omega^g=\omega'$.

	The group $G$ is $2$-transitive on $\Omega$ since $L\leqslant G_\omega$ is transitive on the neighborhood $\Sigma(\omega)=\Omega\setminus\{\omega\}$ and $g\in G$ interchanges $\omega$ and $\omega'$.
	Hence $\Sigma$ is a complete graph and $G$ is transitive on the arcs of $\Sigma$, as in part~(i).
	
	By assumption $G=\langle L,g\rangle$, the graph $\Gamma$ is a connected $G$-arc-transitive extender of $\Sigma$.
	Observe that $L\cap L^g\leqslant G_{\omega}\cap G_{\omega'}$, then $L\cap L^g\leqslant L_{\omega\omega'}=L_{\omega'}$.
	On the other hand, $L_{\omega'}^g=L_{\omega'}^{(\omega\omega')g_0}=L_{\omega'}^{g_0}=L_{\omega'}$, and hence $L_{\omega'}\leqslant L\cap L^g$.
	Further, since $L$ is a transitive subgroup of $G_\omega$ on $\Sigma(\omega)=\Omega\setminus\{\omega\}$, it follows that $\val(\Gamma)=|L:L\cap L^g|=|L:L_{\omega'}|=n-1=\val(\Sigma)$.
	Thus $\Gamma$ is a cover of $\Sigma$ since $L$ is transitive on $\Omega\setminus\{\omega\}$ and $\val(\Gamma)=\val(\Sigma)$ by the equivalence between part~(i) and (ii) of Theorem~\ref{thm:pscover}, as in part~(ii).

	Finally, if $g=(\omega\omega')$ is a transposition, then $G$ is a $2$-transitive permutation group with a transposition. 
	Hence $G$ is equal to the symmetric group $\S_n$, as in part~(iii).
\end{proof}

We are now ready to prove Theorem~\ref{thm:completecover}
\begin{proof}[Proof of Theorem~\ref{thm:completecover}:]
	Suppose that $\Gamma$ is a connected $G$-arc-transitive cover of $\Sigma=\K_n$ with $G\leqslant\Aut\Sigma=\S_n$.
	We may assume $\Gamma=\Cos(G,L,LgL)$ and $\Sigma=\Cos(G,H,HgH)$ where $H=G_\omega$, $L<H$ and $(\omega,\omega')^g=(\omega',\omega)$. by Lemma~\ref{lem:complete}.
	Since $\Gamma$ is a connected cover of $\Sigma$ and $\Sigma=\K_n$, it follows that $G=\langle L,g\rangle$ is $2$-transitive on $\Omega$, $g^2\in L$ and $L$ is transitive on $\Omega\setminus\{\omega\}$ by part~(ii) of Theorem~\ref{thm:pscover}.
	In particular, $g$ normalizes $L_{\omega'}=L_{\omega\omega'}$.

	Suppose that $\Gamma=\Cos(G,L,LgL)$ where $L< G_\omega$ is transitive on $\Omega\setminus\{\omega\}$ and $g\in \mathbf{N}_G(L_{\omega'})$ such that $(\omega,\omega')^g=(\omega',\omega)$, $g^2\in L$ and $G=\langle L,g\rangle$.
	Let $g_0=(\omega\omega')g$ then $g_0$ fixes $\omega$ and $\omega'$.
	It follows that $g=(\omega\omega')g_0$ and $L_{\omega'}^{g_0}=L_{\omega'}^g=L_{\omega'}$.
	Hence the groups $G,H,L$ and the elements $g_0,g$ satisfies the conditions in Construction~\ref{cons:coversn}.
	By Lemma~\ref{lem:coversn}, $\Sigma=\K_n$ and $\Gamma$ is a connected $G$-arc-transitive cover of $\Sigma$.
\end{proof}

The cube graph is the unique $G$-arc-transitive cover of $\Sigma=\K_4$ for $G\leqslant\S_4$, see Example 2.5 in~\cite{Li2022cover}. 
By Theorem~\ref{thm:completecover}, we can find all $G$-arc-transitive covers of $\K_5$ for $G\leqslant\S_5$.
\begin{example}\label{exam:covers5}
	Let $\Omega=\{1,2,...,5\}$ be the vertex set of $\Sigma=\K_5$, and $G\leqslant\Aut\Sigma=\S_5$ arc-transitive on $\Sigma$.
	Then $G=\S_5$, $\A_5$ or isomorphic to $\mathrm{F}_5\cong\mathbb{Z}_5{:}\mathbb{Z}_4$.
	
	Let $\Gamma=\Cos(G,L,LgL)$ be a proper connected $G$-arc-transitive cover of $\Sigma$ where $L< G_\omega$ transitive on $\Omega\setminus\{\omega\}$ and $g\in G\setminus G_\omega$ with $g^2\in L$ for some $\omega\in \Omega$.
	Note that $L$ is a proper subgroup of $G_\omega$ of order divided by $4$.
	Thus
	\[(G,G_\omega,L)\cong (\A_5,\A_4,\mathbb{Z}_2^2),\ (\S_5,\S_4,\mathbb{Z}_2^2),\ (\S_5,\S_4,\mathbb{Z}_4),\ (\S_5,\S_4,\D_8)\mbox{ or }(\S_5,\S_4,\A_4).\]

	For each case above, we can find all the double cosets $LgL$ such that $g\in G\setminus G_\omega$ with $g^2\in L$.
	With these double cosets, the corresponding coset graphs $\Gamma=\Cos(G,L,LgL)$ are $G$-arc-transitive covers of $\Sigma$.
	By computing in Magma~\cite{magma}, there are $5$ non-isomorphic connected $\S_5$-symmetric covers of $\Sigma=\K_5$ listed in Table~\ref{tab:K5}, and a unique connected $\A_5$-symmetric cover which is isomorphic to the graph listed at column~4 in Table~\ref{tab:K5}.
	\qed
\end{example}
\begin{table}[!ht]
	\centering
	\begin{tabular}{llllll}
	$L$ & $g$ & $|V\Gamma|$ & $\Aut\Gamma$ &normal cover of $\K_5$? \\ \hline
	$\langle (23)(45),(24)(35)\rangle\cong\mathbb{Z}_2^2$& $(12)$ & $30$ & $\S_3\times \S_5$ & true \\
	$\langle (2345)\rangle\cong\mathbb{Z}_4$& $(12)$ &$30$&$\S_2\times \S_5$ & false \\
	$\langle (2345)\rangle\cong\mathbb{Z}_4$& $(12)(34)$ &$30$&$\S_2\times \S_5$ & false \\
	$\langle (2345),(35)\rangle\cong \mathrm{D}_8$& $(12)$ & $15$ & $\S_5$ & false\\
	$\langle (234),(23)(45)\rangle\cong \A_4$& $(12)$ & $10$ & $\mathbb{Z}_2\times \S_5$ & true\\
	\hline
	\end{tabular}
	\caption{$\S_5$-symmetric covers of $\K_5$}\label{tab:K5}
\end{table}

Notice that $\Aut\Gamma$ is generally not equal to $G$, the cube graph has automorphism group isomorphic to $\mathbb{Z}_2\times\S_4$ for instance. 
With Table~\ref{tab:K5}, we can observe that if $n\leqslant 5$ then $\S_n\lhd \Aut\Gamma$ for $n\leqslant 5$.
In fact, $\S_n$ is also generally not a normal subgroup of $\Aut\Gamma$.
\begin{example}
	Let $G=\S_6$, $L=\langle (23456)\rangle\leqslant G_1$ and $g=(12)$.
	Then $\Gamma=\Cos(G,L,LgL)$ is a connected $G$-arc-transitive cover of $\Sigma=\K_6$.
	By computing in Magma, we obtain that $\Aut\Gamma\cong \mathbb{Z}_4.\mathrm{P\Gamma L}(2,9)$ and $G$ is not normal in $X$.
\end{example}

\section{Symmetric pseudocovers of complete graphs}\label{sec:pscover}\

Let $\Sigma=\K_n$ be a $G$-arc-transitive complete graph of order $n$ with vertex set $\Omega$ and $G\leqslant \S_n$ a $2$-transitive permutation group.
A $G$-arc-transitive extender $\Gamma$ of $\Sigma$ is of form $\Cos(G,L,LgL)$ with $L<G_\omega$ for some $\omega\in\Omega$ and $g\in G$ such that $g^2\in L$ and $G=\langle L,g\rangle$.
Assume further that $\Gamma$ is a pseudocover of $\Sigma$.
Then
\[|L:L\cap L^g|=n-1,\ \mbox{and $L$ is not transitive on the neighborhood $\Sigma(\omega)=\Omega\setminus\{\omega\}$.}\]

The following lemma is a corollary of Theorem~\ref{thm:pscover}, which we shall apply to construct pseudocovers of complete graphs.

\begin{lemma}\label{cons:(L,g)}
	Let $L<\Sym(\Omega\setminus \{\omega\})\cong \S_{n-1}$ and $g\in\Sym(\Omega)=\S_n$ such that
	\begin{enumerate}
		\item $L$ is an intransitive group of order $n-1$ on $\Omega\setminus\{\omega\}$, and
		\item $g$ is an involution such that $L\cap L^g=1$, and $G:=\langle L,g\rangle$ is a $2$-transitive subgroup of $\S_n$.
	\end{enumerate}
	Then $G$ is symmetric on $\Sigma=\K_n$, and $\Cos(G,L,LgL)$ is a connected $G$-arc-transitive pseudocover of $\Sigma$.
\end{lemma}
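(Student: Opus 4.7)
The plan is to verify in order that $H := G_\omega$, the given subgroup $L$, and the involution $g$ satisfy the hypotheses of Propositions~\ref{prop:extender}, \ref{prop:complete}, and then of Theorem~\ref{thm:pscover}. First I would check $g \notin H$: since $L \leqslant \mathrm{Sym}(\Omega \setminus \{\omega\})$ fixes $\omega$, if $g$ also fixed $\omega$ then $G = \langle L, g \rangle$ would lie in $G_\omega$, contradicting the (2-)transitivity of $G$ on $\Omega$. Similarly $L < H$ is strict, because $G_\omega = H$ is transitive on $\Omega \setminus \{\omega\}$ (again from $2$-transitivity) while $L$ is intransitive by hypothesis~(i). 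Since $g^2 = 1 \in L$, Proposition~\ref{prop:extender} then yields that $\Gamma := \Cos(G, L, LgL)$ is a $G$-arc-transitive extender of $\Sigma := \Cos(G, H, HgH)$, and Proposition~\ref{prop:complete} identifies $\Sigma$ with $\K_n$ since $G$ acts $2$-transitively on $[G : H] \cong \Omega$. This already gives that $G$ is symmetric on $\Sigma = \K_n$.

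Next I would compute the valency and verify connectedness. Proposition~\ref{prop:coset-graph}(i) gives
\[
\val(\Gamma) \;=\; |L : L \cap L^g| \;=\; |L| \;=\; n - 1 \;=\; \val(\Sigma),
\]
using the hypothesis $L \cap L^g = 1$; and connectedness of $\Gamma$ follows from $G = \langle L, g \rangle$ via Proposition~\ref{prop:coset-graph}(iii).

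Finally, with $\val(\Gamma) = \val(\Sigma)$ in hand, Theorem~\ref{thm:pscover} becomes applicable. I would take the arc of $\Gamma$ corresponding to the cosets $(L, Lg)$, so that the block $A$ containing this vertex corresponds to $\omega \in \Omega$; then $G_\alpha = L$ and $\Sigma(A) = \Omega \setminus \{\omega\}$. The assumption that $L$ is intransitive on $\Omega \setminus \{\omega\}$ is precisely condition~(ii) of Theorem~\ref{thm:pscover}, which in turn gives condition~(i): $\Gamma$ is a connected $G$-arc-transitive pseudocover of $\Sigma = \K_n$.

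There is no substantive obstacle; the lemma is essentially a repackaging of Theorem~\ref{thm:pscover} into a ready-to-use production rule. The one point needing care is the initial bookkeeping at $H = G_\omega$, namely confirming $g \notin H$ and $L \lneq H$, without which the coset-graph presentations of $\Sigma$ and $\Gamma$ would not be well defined as simple undirected graphs in the first place.
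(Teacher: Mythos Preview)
Your proposal is correct and follows exactly the route the paper intends: the lemma is stated there without proof as an immediate corollary of Theorem~\ref{thm:pscover} together with Propositions~\ref{prop:coset-graph}, \ref{prop:complete} and \ref{prop:extender}, and your write-up simply fills in those routine verifications. The only additional bookkeeping you supply beyond the paper's implicit argument is the explicit check that $g\notin H$ and $L\lneq H$, which is indeed needed and which you handle correctly.
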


Recall a well-known result in permutation group theory given by Jones, which is an extension of Jordan's Theorem, see~\cite[Theorem 1.2]{jones2014Primitive}.

\begin{lemma}\label{lem:jones}
	Let $G$ be primitive permutation group of degree $n$. If $G$ contains a cycle fixing at least $3$ points then $\A_n\lhd G$.
\end{lemma}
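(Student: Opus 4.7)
The plan is to break the proof into two phases: first show $G$ is $2$-transitive on $\Omega$, then invoke the classification of $2$-transitive permutation groups to identify $\A_n$ inside $G$. This mirrors the strategy of Jones in~\cite{jones2014Primitive}, which itself generalizes classical theorems of Jordan and Manning on primitive groups containing a cycle.

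For the first phase, let $c\in G$ be the hypothesised cycle of length $m\geqslant 2$ with support $S$ and fixed-point set $F=\Omega\setminus S$, so $|F|\geqslant 3$. Since $\langle c\rangle$ fixes $F$ pointwise and acts transitively (in fact regularly) on $S$, the set $F$ is a \emph{Jordan complement} in the sense of Wielandt. The classical Jordan--Wielandt theorem on primitive groups then forces $G$ to be $2$-transitive on $\Omega$; iterating the argument using conjugates of $c$ and the primitivity of $G$ would push the transitivity degree higher as $|F|$ grows, but $2$-transitivity is already enough for what follows.

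For the second phase, apply the CFSG-based classification of $2$-transitive permutation groups, which splits $G$ into affine and almost simple types. In the affine case $G\leqslant\AGL(V)$ with $V\cong\bbF_q^d$ and $n=q^d$, an element of $G$ having at least three fixed points must have its linear part fixing a subspace of dimension $\geqslant\log_q 3$, and one checks that this precludes the existence of a cycle of length $m\leqslant n-3$ on the complementary set unless $G\geqslant\A_n$. In the almost simple case with socle $T$, one enumerates the natural $2$-transitive actions of $\PSL$, $\PSU$, the Mathieu groups, and the remaining sporadic examples, and inspects their cycle structures; only when $T=\A_n$ can a cycle of length $m\leqslant n-3$ with three fixed points occur, yielding $\A_n\lhd G$.

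The main obstacle is the almost simple case analysis, particularly for the infinite families such as $\PSL(d,q)$ acting on projective points. There the question of whether a long cycle with $\geqslant 3$ fixed points exists translates into an arithmetic condition on eigenvalue configurations of matrices in $\GL(d,q)$, and ruling these out typically requires number-theoretic input such as Zsigmondy's theorem on primitive prime divisors together with delicate bookkeeping of orders of $q$ modulo various divisors of $d$. The Mathieu and sporadic cases, by contrast, can be dispatched by direct inspection of conjugacy class data, and the affine case reduces to a short eigenspace calculation.
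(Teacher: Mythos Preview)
The paper does not prove this lemma at all: it is simply quoted as a known result with a reference to \cite[Theorem~1.2]{jones2014Primitive}, and is then used as a black box in the proofs of Lemmas~\ref{lem:consdih}, \ref{lem:zk} and~\ref{lem:kabAn}. So there is no ``paper's own proof'' to compare against; your sketch is instead an outline of Jones's original argument, which the paper is content to cite.

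That said, your outline is broadly faithful to Jones's strategy (Jordan--Wielandt to reach $2$-transitivity, then CFSG to eliminate all $2$-transitive socles other than $\A_n$), though some of the details you give are imprecise. In the affine case your eigenspace heuristic ``linear part fixing a subspace of dimension $\geqslant\log_q 3$'' is not the right formulation: what one actually uses is that a non-identity affine transformation fixing at least one point is conjugate into $\GL(V)$, and a cycle on the complement of its fixed space would force a very constrained rational canonical form, which is then excluded. For the purposes of this paper, however, none of this needs to be reproduced; a bare citation suffices.
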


We will present two generic constructions of pseudocovers of complete graphs $\K_n$.
In the first construction, we take $L$ to be a dihedral group in the case where $n$ is odd.

\begin{construction}\label{cons:k2m}
	Let $\Omega=\{1,...,2m+1\}$ with $m\geqslant 2$, and let
	\[\left\{\begin{aligned}
		a&=(2,3,\cdots,2m+1);\\
		b&=(3,2m+1)(4,2m)\cdots(m+1,m+3);\\
		g&=(1,2)(3,4).
	\end{aligned}\right.\]
	Let $L=\bigl<a^2,b\bigr>\cong \mathrm{D}_{2m}$, and let $G=\left<L,g\right>$.
\end{construction}

\begin{lemma}\label{lem:consdih}
	Let $G$, $L$ and $g$ be constructed in Construction~$\ref{cons:k2m}$. Then $G$ is $2$-transitive on $\Omega$ and $L\cap L^g=1$. Furthermore,
	\[G= \left\{\begin{aligned}
		&\S_{2m+1},\mbox{ $m$ is even};\\
		&\A_{2m+1},\mbox{ $m\geqslant 5$ is odd};\\
		&\mathrm{GL}(3,2), \mbox{ $m=3$}.
	\end{aligned}\right.\]
\end{lemma}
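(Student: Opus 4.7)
The plan is to verify in order the three assertions of the lemma: $L\cap L^g=1$, $G=\langle L,g\rangle$ is $2$-transitive on $\Omega$, and the isomorphism type of $G$. The first two are short structural checks; the last requires Jordan's theorem in the generic cases and a separate argument for the exceptional case $m=3$.

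First, a direct computation shows that $b$ inverts each of the two $m$-cycles of $a^2$, so $L\cong\D_{2m}$ with orbits $\{2,4,\dots,2m\}$ and $\{3,5,\dots,2m+1\}$ on $\Omega\setminus\{1\}$. Since $L\leqs G_1$ and $L^g\leqs G_2$, any $x\in L\cap L^g$ fixes both $1$ and $2$; enumerating the elements of $L$ shows $L\cap G_2=\{1,b\}$, so by conjugation $L^g\cap G_1=\{1,b^g\}$. Because $b$ commutes with $(1,2)$ we have $b^g=b^{(3,4)}\neq b$, whence $L\cap L^g=1$. For $2$-transitivity, the element $b^g\in G_1$ sends $3$ into the opposite $L$-orbit (to $2m$ if $m\geqs 3$, to $5$ if $m=2$), so $\langle L,b^g\rangle\leqs G_1$ is transitive on $\Omega\setminus\{1\}$; since $g$ moves $1$ to $2$, $G$ is $2$-transitive on $\Omega$.

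The main tool for identifying $G$ is Jordan's theorem: a primitive permutation group containing a $3$-cycle contains $\A_n$. Form $h_1:=b\cdot b^g=(3,4)(2m,2m+1)$ and $h_1^{a^2}=(5,6)(2,3)$. When $m\geqs 4$, these four transpositions share only the single entry $3$, so $h_1 h_1^{a^2}=(2,4,3)(5,6)(2m,2m+1)$ has disjoint cycles, and squaring yields the $3$-cycle $(2,3,4)\in G$. For $m=2$ the element $h_1=(3,5,4)$ is itself a $3$-cycle. In both subcases Jordan combined with $2$-transitivity (hence primitivity) gives $\A_{2m+1}\lhd G$. Since $a^2$ and $g$ are even while $b$ has sign $(-1)^{m-1}$, we conclude $G=\S_{2m+1}$ when $m$ is even and $G=\A_{2m+1}$ when $m\geqs 5$ is odd.

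The main obstacle is $m=3$: the transpositions of $h_1=(3,4)(6,7)$ and $h_1^{a^2}=(5,6)(2,3)$ overlap in two entries ($3$ and $6$), so $h_1 h_1^{a^2}=(2,4,3)(5,7,6)$, whose square $(2,3,4)(5,6,7)$ is a double $3$-cycle rather than a single $3$-cycle, and Jordan does not apply. For this case I would exhibit the $G$-invariant Fano plane on $\{1,\dots,7\}$ with lines
\[\{1,2,5\},\ \{1,3,6\},\ \{1,4,7\},\ \{2,3,7\},\ \{2,4,6\},\ \{3,4,5\},\ \{5,6,7\},\]
and verify line-by-line that each of $a^2$, $b$, $g$ permutes these seven triples, embedding $G$ into $\Aut(\mathrm{Fano})=\GL(3,2)\cong\PSL(2,7)$. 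Since $\PSL(2,7)$ is simple with maximal subgroups of orders $21$ and $24$, it has no proper subgroup of order divisible by $42=7\cdot 6$; combined with $|G|\geqs 42$ forced by $2$-transitivity on $7$ points, this forces $G=\GL(3,2)$.
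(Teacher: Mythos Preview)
Your argument is correct in strategy and, with one small slip, in detail. For $L\cap L^g=1$ and $2$-transitivity you proceed essentially as the paper does (both conjugate $b$ by $g$ and work inside $G_1$); the slip is at $m=2$, where $b=(3,5)$ and $b^g=(4,5)$ fixes $3$ rather than sending it to $5$. The conclusion is unaffected, since $b^g$ still sends $4\in\{2,4\}$ to $5\in\{3,5\}$ and so merges the two $L$-orbits.

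The genuine divergence from the paper is in how a $3$-cycle is produced for $m\geqslant4$ and how $m=3$ is handled. The paper computes the commutator $y=[a^2b,g]=(1,2m,2)(3,4)(2m-1,2m-2)$ and squares it; you instead form $h_1=bb^g=(3,4)(2m,2m+1)$, multiply by its $a^2$-conjugate $(5,6)(2,3)$, and square. Both are short routes to a $3$-cycle and hence to Jordan's theorem (a couple of your cycles are written in the inverse direction, e.g.\ $(2,4,3)$ for $(2,3,4)$, but this is immaterial). For $m=3$ the paper simply quotes a Magma computation giving an explicit isomorphism $G\to\GL(3,2)$, whereas you exhibit a $G$-invariant Fano plane and use the maximal-subgroup orders $21,24$ of $\PSL(2,7)$ to force equality from $|G|\geqslant42$. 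I verified that $a^2=(2,4,6)(3,5,7)$, $b=(3,7)(4,6)$, and $g=(1,2)(3,4)$ each permute your seven lines, so this works; your treatment of $m=3$ is more self-contained than the paper's and is a clear improvement.
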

\begin{proof}
	By definitions, $L$ fixes the point $1$ and $1^g=2$.
	Thus $L\cap L^g$ fixes both points $1$ and $2$, and hence $L\cap L^g\leqslant \langle b\rangle\cong\mathbb{Z}_2$.
	Since $[b,g]\neq 1$, we obtain that $L\cap L^g=1$.

	Observe that the subgroup $L$ acting on $\Omega$ has exactly three orbits $\{1\}$, $O_1$ and $O_2$, where
	\[\mbox{$O_1=\{3,...,2m+1\}$, and $O_2=\{2,4,...,2m\}$.}\]
	Since $1^g=2$ and $3^g=4$, it follows that $G=\langle L,g\rangle$ is transitive on $\Omega$.

	Now, let $x=gbg\in G$.
	Then $x$ fixes the point 1 since $1^x=1^{gbg}=2^{bg}=2^g=1$, and $x$ sends the point $3\in O_1$ to the point $2m\in O_2$, since
	\[3^x=3^{gbg}=4^{bg}=(2m)^g=2m.\]
	Hence the subgroup $\langle L,x\rangle$ fixes the point 1 and is transitive on $O_1\cup O_2=\Omega\setminus\{1\}$, and so $G$ is $2$-transitive on $\Omega$.

	Next, we determine the $2$-transitive group $G$.
	For the smallest case $m=2$, we have $a^2=(24)(35)$ and $b=(35)$, and it is then easily shown that
	$G=\langle a^2,b,g\rangle=\langle(24)(35),(35),(12)(34)\rangle=\S_5$.
		
	Assume that $m=3$.
	Then $|\Omega|=7$, $a^2=(246)(357)$, $b=(37)(46)$ and $g=(12)(34)$.
	Computation with Magma shows that there exists an isomorphism $\varphi$ between the group $G=\langle a^2,b,g\rangle$ and $\GL(3,2)$ such that
	\[\varphi(a^2)=\begin{pmatrix}
		1&  0&0  \\
		0&  0& 1 \\
		0& 1 & 1
	\end{pmatrix},\ \
	\varphi(b)=\begin{pmatrix}
		1&  0&0  \\
		0&  1& 0 \\
		0& 1 & 1
	\end{pmatrix},\ \
	\varphi(g)=\begin{pmatrix}
		0&  1&0 \\
		1&  0& 0 \\
		1& 1& 1
	\end{pmatrix}.\]

	Finally, assume that $m\geqslant 4$.
	Let $c=a^2b=b^{a^{-1}}=(2,2m)(3,2m-1)\cdots(m,m+2)$, and $y=[c,g]$.
	Then $c,y\in G=\langle L,g\rangle$.
	Set $c'=(2,2m)(3,2m-1)(4,2m-2)$, we have $[c,c']=1$ and $\mathrm{Supp}(cc')\cap \mathrm{Supp}(g)=1$.
	Hence $[cc',g]=1$ and
	\[y=[c,g]=[cc'c',g]=[cc',g]^{c'}[c',g]=[c',g].\]
	Notice that
	\[\mathrm{Supp}(y)=\mathrm{Supp}([c',g])\subseteq \mathrm{Supp}(c')\cup \mathrm{Supp}(g)=\{1,2,3,4,2m,2m-1,2m-2\}\]
	Moreover,
	\[\left\{\begin{aligned}
		&1^y=1^{c'gc'g}=1^{gc'g}=2^{c'g}=(2m)^g=2m;\\
		&2^y=2^{c'gc'g}=(2m)^{gc'g}=(2m)^{c'g}=(2)^g=1;\\
		&3^y=3^{c'gc'g}=(2m-1)^{gc'g}=(2m-1)^{c'g}=3^g=4;\\
		&4^y=4^{c'gc'g}=(2m-2)^{gc'g}=(2m-2)^{c'g}=4^g=3;\\
		&(2m)^y=(2m)^{c'gc'g}=2^{gc'g}=1^{c'g}=1^g=2;\\
		&(2m-1)^y=(2m-1)^{c'gc'g}=3^{gc'g}=4^{c'g}=(2m-2)^g=2m-2;\\
		&(2m-2)^y=(2m-2)^{c'gc'g}=4^{gc'g}=3^{c'g}=(2m-1)^g=2m-1.
	\end{aligned}\right.
	\]
	Therefore, $y=[a^2b,g]=(1,2m,2)(3,4)(2m-1,2m-2)$ and $y^2$ is a $3$-cycle in $G$.
	Hence $G$ is $2$-transitive and $y^2$ fixes at least $3$ elements, and so $\A_{2m+1}\lhd G$ by Lemma~\ref{lem:jones}.
	Noticing that $a^2,b,g$ are even permutations for odd $m$, and $b$ is an odd permutation for even $m$, we conclude that $G= \A_{2m+1}$ if $m\geqslant 5$ is odd, and $G= \S_{2m+1}$ if $m$ is even.
\end{proof}

By Lemma~\ref{cons:(L,g)}, Each triple $(G,L,g)$ given in Construction~\ref{cons:k2m} gives rise to a pseudocover of $\K_n$.

\begin{lemma}\label{coro:consdih}
Let $G$, $L$ and $g$ be as defined in Construction~$\ref{cons:k2m}$.
Then $\Gamma=\Cos(G,L,LgL)$ is a connected $G$-arc-transitive pseudocover of $\Sigma=\K_n$ with $n=2m+1$.
\end{lemma}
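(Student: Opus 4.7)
The plan is to deduce this as an immediate corollary of Lemma~\ref{cons:(L,g)}: Construction~\ref{cons:k2m} was engineered precisely so that the triple $(L,g,G)$ satisfies the two hypotheses of that lemma, taking $\omega = 1$ as the distinguished vertex. After that verification, the conclusion of Lemma~\ref{cons:(L,g)} is exactly the assertion we want. So the work reduces to checking conditions (i) and (ii) of Lemma~\ref{cons:(L,g)}, almost every piece of which has already been done inside Lemma~\ref{lem:consdih}.

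First I would dispose of condition (i). Since $L=\langle a^2,b\rangle\cong \mathrm{D}_{2m}$, one has $|L|=2m=n-1$. The proof of Lemma~\ref{lem:consdih} exhibits the orbit decomposition of $L$ on $\Omega$ as $\{1\}\cup O_1\cup O_2$ with $O_1=\{3,\dots,2m+1\}$ and $O_2=\{2,4,\dots,2m\}$, so $L$ fixes the point $1$ and is intransitive on $\Omega\setminus\{1\}$; setting $\omega=1$, this is exactly condition (i). Next, for condition (ii), the element $g=(1,2)(3,4)$ is visibly an involution, while $L\cap L^g=1$ and the fact that $G=\langle L,g\rangle$ is $2$-transitive on $\Omega$ are both conclusions already established in Lemma~\ref{lem:consdih}. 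This exhausts the hypotheses of Lemma~\ref{cons:(L,g)}.

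Applying Lemma~\ref{cons:(L,g)} then yields that $\Cos(G,L,LgL)$ is a $G$-arc-transitive pseudocover of $\Sigma=\K_n$, while connectedness follows from $G=\langle L,g\rangle$ via Proposition~\ref{prop:coset-graph}(iii). Since all the substantive computations were front-loaded into Lemma~\ref{lem:consdih}, there is no real obstacle here; the only point requiring care is the bookkeeping that identifies $\omega=1$ as the common distinguished vertex, so that the two lemmas are talking about the same data.
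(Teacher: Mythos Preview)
Your proposal is correct and mirrors the paper's own treatment: the paper presents Lemma~\ref{coro:consdih} without a separate proof, prefacing it only with the sentence ``By Lemma~\ref{cons:(L,g)}, each triple $(G,L,g)$ given in Construction~\ref{cons:k2m} gives rise to a pseudocover of $\K_n$,'' and relies on Lemma~\ref{lem:consdih} for the verification of the hypotheses. Your write-up simply makes this verification explicit; the only superfluous step is invoking Proposition~\ref{prop:coset-graph}(iii) for connectedness, since that is already part of the conclusion of Lemma~\ref{cons:(L,g)}.
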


Since a graph with a prime valency has no connected symmetric pseudocovers, it follows that $n=5$ is the smallest integer such that $\K_n$ has a connected symmetric pseudocover.
In fact, the graph $\Gamma$ defined in Lemma~\ref{lem:consdih} with $m=2$ is the unique $\S_5$-arc-transitive pseudocover of $\K_5$.
\begin{example}\label{ex:K_5}
	{\rm
	Let $\Sigma=\K_5$, with vertex set $\Omega=\{1,2,3,4,5\}$, and let $G\leqslant\Sym(\Omega)=\S_5$ be symmetric on $\Sigma$.
	Then $(G,G_\omega)=(\A_5,\A_4)$ or $(\S_5,\S_4)$.

	Assume that $\Gamma=\Cos(G,L,LgL)$ is a connected $G$-arc-transitive pseudocover of $\Sigma$.
	Then we may choose $L<G_\omega$ such that $L$ is intransitive on $\Omega\setminus\{\omega\}$.
	It follows that $(G,G_\omega,L)\cong (\S_5,\S_4,\mathbb{Z}_2^2)$.
	Without loss of generality, we may suppose that $\omega=q$, and $G_\omega=\Sym\{2,3,4,5\}$.
	It is easily shown that $L$ is conjugate in $G_\omega$ to $\langle(24)(35),(35)\rangle\cong\D_4$.

	Let $g\in G=\S_5$ be such that $g^2\in L$ and $\langle L,g\rangle=G$.
	If $|g|>2$, then $1<L\cap L^g\lhd\langle L,g\rangle=\S_5$, which is not possible.
	Thus $g$ is an involution, and it follows since $\langle L,g\rangle=G$ that $g$ is conjugate under $\N_G(L)$ to $(12)(34)$, where $\N_G(L)=\langle(2345),(35)\rangle\cong\D_8$.
	Therefore, up to isomorphism, the graph $\Gamma=\Cos(G,L,LgL)$ is the only connected faithful symmetric pseudocover of $\K_5$, see the first graph in Figure~\ref{fig:psK5}.
	\qed
	}
\end{example}
\begin{figure}[!ht]
	\centering
	\includegraphics[scale=0.3]{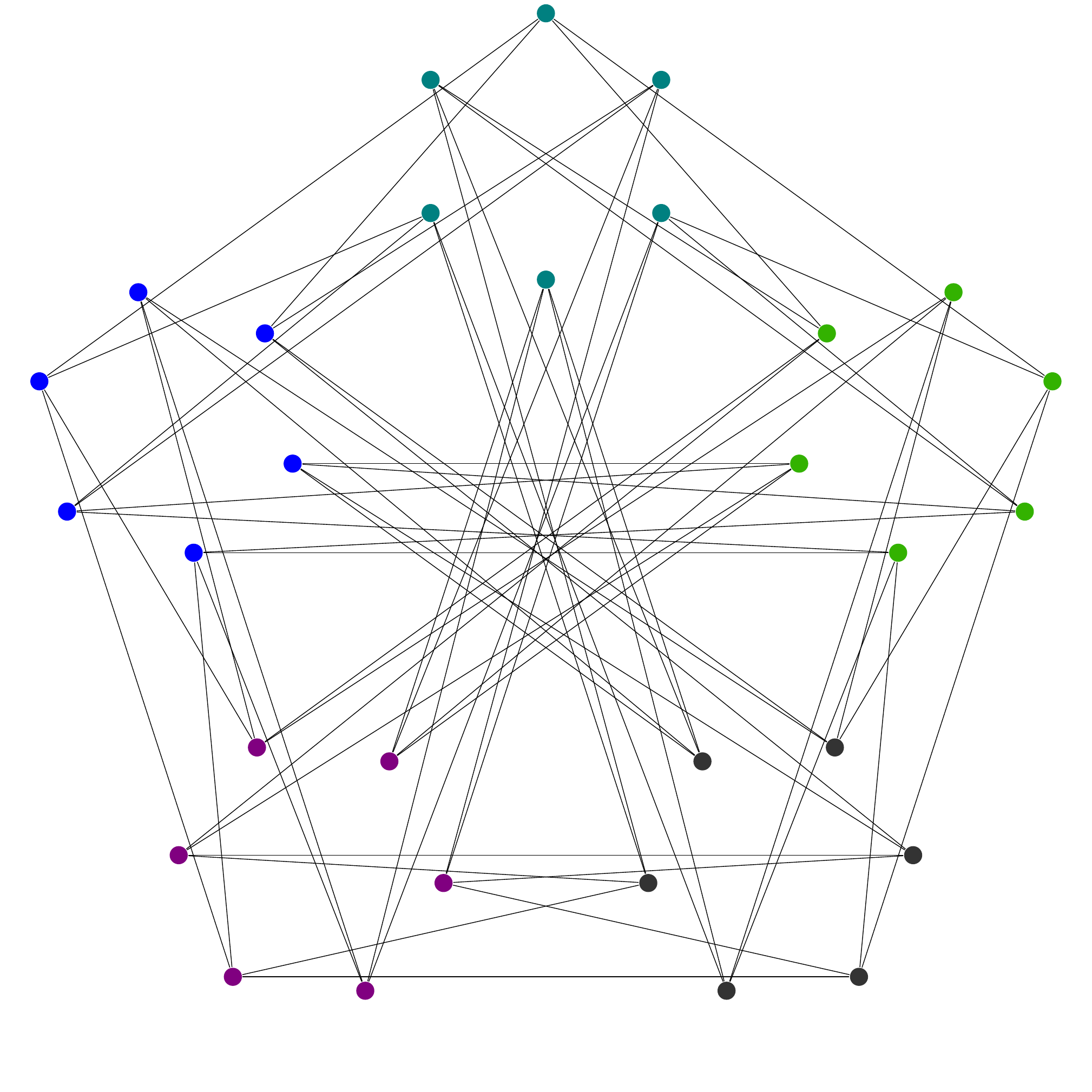}
	\caption{Pseudocover of $\K_5$}\label{fig:psK5}
\end{figure}

Hence we conclude the classification of $G$-arc-transitive covers and pseudocovers of $\K_n$ with $G\leqslant\S_n$ for $n=4$ and $5$, which answers Question~\ref{problem:cover} for $n\leqslant 5$.
\begin{corollary}\label{coro:exk4k5}
	\begin{enumerate}
		\item $\K_4$ has no connected symmetric pseudocovers. The cube graph is the unique $G$-arc-transitive cover of $\K_4$ for $G\leqslant\Aut\K_4=\S_4$.
		\item $\K_5$ has a unique $G$-arc-transitive pseudocover and $5$ non-isomorphic $G$-arc-transitive covers for $G\leqslant\Aut\K_5=\S_5$.
	\end{enumerate}
\end{corollary}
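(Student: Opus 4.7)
The plan is to assemble the corollary from ingredients already developed in the paper, plus a short direct enumeration for $\K_4$. The two parts decouple naturally by the value of $n$.

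For part (i), the non-existence of connected symmetric pseudocovers of $\K_4$ follows immediately from the fact that the valency $n-1 = 3$ is prime: by the remark after Theorem~\ref{thm:complete}, any $G$-arc-transitive extender of $\K_4$ with the same valency is $G$-locally-primitive, and by the discussion in Section~\ref{sec:prel} (following Proposition~\ref{prop:extender}) a $G$-locally-primitive extender is a pseudocover only when disconnected. For the uniqueness of the cube as a connected $G$-arc-transitive cover, I would apply Theorem~\ref{thm:completecover}. The subgroup $L < G_\omega \leqs \S_3$ must be transitive on the three non-fixed points, so $3 \mid |L|$ and $|L| < |G_\omega|$, forcing $G_\omega = \S_3$, $L \cong \mathbb{Z}_3$, and hence $G = \S_4$. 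Fixing $L = \langle (234)\rangle$ and running through elements $g \in G \setminus G_\omega$ that swap $\omega$ and $\omega'$, normalize $L_{\omega'} = 1$, satisfy $g^2 \in L$, and generate $G$ together with $L$, one extracts the double cosets $LgL$ and checks that only one coset graph arises (necessarily the cube, as in Example~2.5 of \cite{Li2022cover}).

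For part (ii), the cover count is exactly what Example~\ref{exam:covers5} already computes. The $2$-transitive subgroups of $\S_5$ are $\S_5$, $\A_5$ and $\mathrm{F}_5 \cong \Z_5{:}\Z_4$; for each, I would list the proper subgroups $L < G_\omega$ with $4 \mid |L|$ (so $L$ can be transitive on the four non-fixed points). The Frobenius case gives nothing because $G_\omega \cong \Z_4$ has no proper subgroup of order divisible by $4$. The five surviving triples $(G, G_\omega, L)$ are exactly those listed in Example~\ref{exam:covers5}, and Theorem~\ref{thm:completecover} together with a Magma computation of the resulting double cosets and their coset graphs produces Table~\ref{tab:K5} and the observation that the unique $\A_5$-cover coincides with the row for $L \cong \A_4$.

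For the pseudocover of $\K_5$, I would invoke Lemma~\ref{cons:(L,g)}: we need $L < G_\omega$ with $|L| = 4$ and $L$ intransitive on $\Omega \setminus \{\omega\}$. Eliminating the Frobenius and $\A_5$ cases exactly as in Example~\ref{ex:K_5} leaves $(G, G_\omega, L) \cong (\S_5, \S_4, \Z_2^2)$ with $L$ conjugate (in $\S_4$) to the intransitive Klein four-group $\langle (24)(35),(35)\rangle$. It remains to classify involutions $g \in \S_5$ with $L \cap L^g = 1$ and $\langle L, g\rangle = \S_5$, modulo $\mathbf{N}_G(L)$-conjugacy; arguing as in Example~\ref{ex:K_5} (the normality argument ruling out $|g| > 2$, then a direct count under $\N_G(L) \cong \D_8$) pins down a single $g$ up to equivalence, giving one graph. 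The main obstacle is the cover enumeration for $(G,G_\omega,L) = (\S_5,\S_4,\Z_4)$, where different choices of $g$ (e.g.\ $(12)$ versus $(12)(34)$) produce distinct double cosets that could a priori yield isomorphic graphs; separating them requires the explicit automorphism-group computation recorded in Table~\ref{tab:K5}, most cleanly done with Magma.
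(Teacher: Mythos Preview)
Your approach matches the paper's, which simply defers to Example~2.5 of \cite{Li2022cover} for part~(i) and to Examples~\ref{exam:covers5} and~\ref{ex:K_5} for part~(ii); you have essentially unpacked the content of those examples. One small slip: the unique connected $\A_5$-arc-transitive cover has $|\A_5|/|\mathbb{Z}_2^2|=15$ vertices and coincides with the $L\cong\D_8$ row of Table~\ref{tab:K5}, not the $L\cong\A_4$ row (which has $10$ vertices); this does not affect the final count of $5$.
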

\begin{proof}
	The part~(i) is discussed in Example 2.5 of~\cite{Li2022cover}.
	The part~(ii) is given by Example~\ref{exam:covers5} and Example~\ref{ex:K_5}.
\end{proof}

Our next construction provides examples of connected symmetric pseudocovers of $\K_n$ for all admissible values of $n$, namely, $n-1$ is not a prime.
We shall take an abelian subgroup $L$ of $\Sym(\Omega)$ such that $L\cong\mathbb{Z}_a\times\mathbb{Z}_b$ with $n-1=ab$, and $L$ fixes a point $\omega=n$ and is intransitive on $\Omega\setminus\{\omega\}$.
Then find an involution $g\in\Sym(\Omega)$ such that $\langle L,g\rangle$ is 2-transitive on $\Omega$.
We notice that such an involution $g$ should link the orbits of $L$ together so that $\langle L,g\rangle$ is transitive on $\Omega$.

\begin{construction}\label{cons:kab}
	Suppose that $n=ab+1$ with $1<a\leqslant b$ and $\Omega=\{1,...,n\}$. Define
	\[\left\{\begin{aligned}
		x&=(1,2,\cdots,b)(b+1,\cdots,2b)\cdots\bigl((a-2)b+1,\cdots,(a-1)b\bigr),&\mbox{ product of $a-1$ disjoint $b$-cycles};&\\
		y&=\bigl((a-1)b+1,\cdots,(a-1)b+a\bigr),&\mbox{ an $a$-cycle};&\\
		g_1&=(b,b+1)(2b,2b+1)\cdots\bigl((a-1)b,(a-1)b+1\bigr),&\mbox{ product of transpositions};&\\
		g_2&=\bigl(1,(a-1)b+a+1\bigr)\bigl(2,(a-1)b+a+2\bigr)\cdots(b-a+1,ab+1),&\mbox{ product of transpositions}.&
	\end{aligned}\right.\]
	Let $g=g_1g_2$, $L=\langle x,y\rangle$ and $G=\langle L,g\rangle$.
\end{construction}

We shall prove that $G=\Sym(\Omega)$ or $\Alt(\Omega)$ by a series of lemmas.
The first one shows that $G$ is transitive on $\Omega$.

\begin{lemma}\label{lem:kabprop}
Let $G,L$ and $g$ be as defined in Construction~$\ref{cons:kab}$.
Then $g^2=1$, $L\cong\mathbb{Z}_a\times\mathbb{Z}_b$, $L\cap L^g=1$, and $G=\langle L,g\rangle$ is transitive on $\Omega$.
\end{lemma}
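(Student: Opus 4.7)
The plan is to verify the four claims in turn by direct computation with the permutations in Construction~\ref{cons:kab}. The first two assertions follow from routine support/disjointness checks; the third ($L\cap L^g=1$) is the most delicate and is the main obstacle; the last (transitivity) follows by inspecting how $g$ connects the $L$-orbits.

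For $g^2=1$ and the structure of $L$: the support of $g_1$ is contained in $\{kb,\,kb+1:1\leqslant k\leqslant a-1\}$, whereas the support of $g_2$ is contained in $\{1,\ldots,b-a+1\}\cup\{(a-1)b+a+1,\ldots,ab+1\}$. Using $a\geqslant 2$ and $a\leqslant b$ one checks that these sets are disjoint, so $g_1$ and $g_2$ commute, and since each is an involution $g^2=g_1^2g_2^2=1$. Likewise $x$ and $y$ have disjoint supports $\{1,\ldots,(a-1)b\}$ and $\{(a-1)b+1,\ldots,(a-1)b+a\}$, so they commute, $\langle x\rangle\cap\langle y\rangle=1$, and $L=\langle x\rangle\times\langle y\rangle\cong\mathbb{Z}_b\times\mathbb{Z}_a$.

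For the key step $L\cap L^g=1$, set $\omega=ab+1$; then $L$ fixes $\omega$, and the transposition $(b-a+1,\,ab+1)$ appearing in $g_2$ (together with the fact that $g_1$ fixes $\omega$) yields $\omega^g=b-a+1$. Consequently $L\cap L^g$ fixes both $\omega$ and $\omega^g$. Since $\langle x\rangle$ acts regularly on the $b$-cycle $\{1,\ldots,b\}$ of $x$ that contains $\omega^g$, while $\langle y\rangle$ fixes $\omega^g$, the $L$-stabilizer of $\omega^g$ equals $\langle y\rangle$; hence $L\cap L^g\leqslant\langle y\rangle$. It then remains to show that no non-trivial $y^k$ lies in $L^g$. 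Since $g^2=1$, this amounts to checking that $gy^kg\notin L$, for which I would first compute the support of $gy^kg$ as $g(\mathrm{Supp}(y^k))=\{(a-1)b,(a-1)b+2,\ldots,(a-1)b+a\}$, and then argue by cases that no element $x^iy^j\in L$ has this particular support: every non-trivial element of $L$ either has support contained in $\{1,\ldots,(a-1)b\}$ (when $y^j=1$) or contains the point $(a-1)b+1$ (when $y^j\neq 1$), while the target set satisfies neither condition. The careful bookkeeping of $g$'s action on the $a$ points moved by $y$ is the step where the argument requires care.

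For transitivity of $G=\langle L,g\rangle$ on $\Omega$, I would list the $L$-orbits as the $a-1$ cycles $O_k=\{(k-1)b+1,\ldots,kb\}$ of $x$ for $1\leqslant k\leqslant a-1$, the $a$-cycle $O_a=\{(a-1)b+1,\ldots,(a-1)b+a\}$ of $y$, and the $b-a+1$ singletons $\{(a-1)b+a+1\},\ldots,\{ab+1\}$ fixed by $L$. The transpositions in $g_1$ yield a chain linking $O_1,O_2,\ldots,O_a$ successively via the pairs $(kb,\,kb+1)$, and the transpositions in $g_2$ link $O_1$ to every one of the singletons, so all $L$-orbits fuse into a single $\langle L,g\rangle$-orbit.
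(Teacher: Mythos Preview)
Your proposal is correct and follows essentially the same route as the paper: disjoint-support checks give $g^2=1$ and $L\cong\mathbb{Z}_a\times\mathbb{Z}_b$; the orbit-linking via the transpositions in $g_1$ and $g_2$ gives transitivity; and for $L\cap L^g=1$ both you and the paper first reduce to $L\cap L^g\leqslant\langle y\rangle$ using the fixed points $n$ and $n^g=b-a+1$.

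The only noteworthy difference is in how the final step of $L\cap L^g=1$ is finished. The paper argues via a single witness point: it observes that $(a-1)b$ is fixed by $y$ but moved by $y^g$, so (implicitly using that $L\cap L^g$ is $g$-invariant and hence also contained in $\langle y^g\rangle$) no nontrivial power of $y$ can lie in $L^g$. You instead compute the full support $g(\mathrm{Supp}(y))=\{(a-1)b,(a-1)b+2,\dots,(a-1)b+a\}$ and check it against the two possible shapes of supports of nontrivial elements of $L$. Your version is a bit longer but more explicit; the paper's version is terser but relies on the reader supplying the $g$-invariance of $L\cap L^g$ to turn ``$[y,g]\neq1$'' into the desired conclusion.
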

\begin{proof}
	Observe that the support set of $g_1$ is $\{kb\mid 1\leqslant k\leqslant a-1\}\cup\{kb+1\mid 1\leqslant k\leqslant a-1\}$, and all of these points are fixed by $g_2$.
	Hence $[g_1,g_2]=1$ and $g^2=(g_1g_2)^2=g_1^2g_2^2=1$ since $g_1$ and $g_2$ are involutions by definition.

	Note that $y=\bigl((a-1)b+1,\cdots,(a-1)b+a\bigr)$ is an $a$-cycle.
	By definition, $x$ fixes points $(a-1)b+1,...,(a-1)b+a$.
	Hence $[x,y]=1$ and $L=\langle x,y\rangle=\langle x\rangle\times\langle y\rangle\cong\mathbb{Z}_a\times\mathbb{Z}_b$.
	
	Note that both $x$ and $y$ fix the point $n=ab+1$, then $L=\langle x,y\rangle\leqslant G_n$.
	Since $n^g=b-a+1$, it follows that $L\cap L^g\leqslant L$ fixes the point $b-a+1$.
	As $x^k$ does not fix point $b-a+1$ for $1\leqslant k\leqslant b-1$, we have $L\cap L^g\leqslant L_{b-a+1}=\langle y\rangle\cong\mathbb{Z}_2$.
	Note that $y$ fixes the point $(a-1)b$ and $y^g$ maps the point $(a-1)b$ to the point $(a-1)b+2$.
	Thus $[y,g]\neq 1$, and therefore $L\cap L^g=1$.

	Let
	\[\left\{\begin{aligned}
		O_{x,k}&=\{(k-1)b+1,(k-1)b+2,\cdots kb\}\mbox{ for $1\leqslant k\leqslant a-1$};\\
		O_y&=\{(a-1)b+1,\cdots,(a-1)b+a\};\\
		O&=\{(a-1)b+a+1,(a-1)b+a+2,...,ab+1=n\}.
	\end{aligned}\right.\]
	Then each $O_{x,k}$ is an orbit of $\langle x\rangle$, $O_y$ is an orbit of $\langle y\rangle$, and elements in $O$ are fixed by both $x$ and $y$.
	Thus $O_{x,k}$ and $O_y$ are orbits of $L=\langle x,y\rangle$, and elements in $O$ are fixed by $L$.
	Notice that $\Omega=O_{x,1}\cup\cdots\cup O_{x,a-1}\cup O_y\cup O$.
	To show the group $G=\langle L,g\rangle$ is transitive on $\Omega$, it suffices to show $g$ maps each element in $O$ to some element in $O_y$, links subsets $O_{x,1},...,O_{x,a-1}, O_y$.
	By the definition, $g$ maps points $kb\in O_{x,k}$ for $1\leqslant k\leqslant a-2$ to points $kb+1\in O_{x,k+1}$;
	maps the point $(a-1)b\in O_{x,a-1}$ to the point $(a-1)b+1\in O_y$;
	and maps points $(a-1)b+a+t\in O$ to points $t\in O_{x,1}$ for $1\leqslant t\leqslant b-a+1$.
	Hence the group $G=\langle L,g\rangle$ is transitive on $\Omega$.
\end{proof}

The next lemma shows that $G$ contains an alternating subgroup on a large subset of $\Omega$.

\begin{lemma}\label{lem:zk}
Given a triple $(G,L,g)$, as defined in Construction~$\ref{cons:kab}$, let $z_k=y^{(gx)^k}$ and $P=\langle L,z_1,...,z_{a-1}\rangle$.
	Then $z_k=\bigl((a-k-1)b+1,(a-1)b+2,\cdots,(a-1)b+a\bigr)$ is an $a$-cycle for $0\leqslant k\leqslant a-1$ and
	\[\mathrm{Alt}\{1,2,...,(a-1)b+a\}\lhd P\leqslant\mathrm{Sym}\{1,2,...,(a-1)b+a\}\cap G_n,\]
namely, the stabilizer of the point $n$ in $G$ contains a subgroup $\mathrm{Alt}\{1,2,...,(a-1)b+a\}\cong\A_{(a-1)b+a}$.
\end{lemma}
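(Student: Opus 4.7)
The plan is to verify the explicit formula for the $a$-cycle $z_k$ by induction on $k$, and then to assemble $\mathrm{Alt}\{1,\ldots,m\}$ (where $m = (a-1)b+a$) inside $P$ by extracting 3-cycles from products $z_0 z_k^{-1}$ and enlarging the resulting subgroup via conjugation by $x$ and $y$.

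For the cycle formula, since conjugation of a cycle by $h$ relabels each entry by its $h$-image, it suffices to track the action of $(gx)^k$ on the $a$ entries $(a-1)b+1, (a-1)b+2, \ldots, (a-1)b+a$ of $y$. A direct inspection of the supports of $g_1$, $g_2$, and $x$ shows that $gx$ fixes $(a-1)b+2, \ldots, (a-1)b+a$, and that $gx$ sends $(a-j-1)b+1 \mapsto (a-j-2)b+1$ for $0 \leq j \leq a-2$ (where the final image, for $j = a-2$, reads $1 = (a-a)b+1$). Iterating these two facts yields the claimed $a$-cycle. The containment $P \leq \mathrm{Sym}\{1,\ldots,m\} \cap G_n$ then follows at once: every generator of $P$ has support inside $\{1,\ldots,m\}$, and $m \leq ab < n$ because $a \leq b$.

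The main content is the containment $\mathrm{Alt}\{1,\ldots,m\} \leq P$. Writing $P_j := (a-j-1)b+1$ and $Q_j := (a-1)b+1+j$, every $z_k$ takes the form $(P_k, Q_1, Q_2, \ldots, Q_{a-1})$, with the tail $(Q_1,\ldots,Q_{a-1})$ shared across all $z_k$. A short cycle calculation shows that the telescoping product $z_0 z_k^{-1}$ equals the 3-cycle $(P_0, P_k, Q_{a-1})$ for each $k \in \{1,\ldots,a-1\}$. The resulting $a-1$ three-cycles all contain the pair $\{P_0, Q_{a-1}\}$, and by the standard fact that three-cycles sharing a common pair of points generate the alternating group on the union of their supports, we obtain $\mathrm{Alt}\{P_0, P_1, \ldots, P_{a-1}, Q_{a-1}\} \leq P$.

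To enlarge to $\mathrm{Alt}\{1,\ldots,m\}$, I conjugate successively by powers of $x$ and of $y$ (both in $L \leq P$), invoking the elementary lemma that two alternating groups on subsets of a set sharing at least two common points together generate the alternating group on the union of the subsets. The element $x$ fixes $\{P_0, Q_{a-1}\}$ pointwise and permutes each $P_k$ ($k \geq 1$) cyclically through its $b$-cycle orbit $O_{x,a-k}$, so the $b$ conjugates of $\mathrm{Alt}\{P_0, P_1, \ldots, P_{a-1}, Q_{a-1}\}$ by $x^j$ pairwise intersect in their supports in at least $\{P_0, Q_{a-1}\}$; their join is therefore $\mathrm{Alt}(\{1,\ldots,(a-1)b\} \cup \{P_0, Q_{a-1}\})$. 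The element $y$ in turn fixes $\{1,\ldots,(a-1)b\}$ pointwise and cyclically permutes $\{P_0, Q_1, \ldots, Q_{a-1}\}$, so the $a$ conjugates of the previous group under $y^j$ pairwise share $\{1,\ldots,(a-1)b\}$ (which has size $(a-1)b \geq 2$), and their join is $\mathrm{Alt}\{1,\ldots,m\}$. I expect the main bookkeeping obstacle to be the telescoping identity $z_0 z_k^{-1} = (P_0, P_k, Q_{a-1})$: although the cancellation in the common tail is morally transparent, one must carefully verify that multiplying two $a$-cycles which overlap in $a-1$ positions produces a 3-cycle of this specific form rather than some more complicated cycle structure.
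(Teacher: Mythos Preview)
Your proposal is correct and takes a genuinely different route from the paper's proof. Both arguments verify the explicit formula for $z_k$ in the same way (tracking $(gx)^k$ on the entries of $y$), but they diverge in establishing $\mathrm{Alt}\{1,\dots,m\}\leqslant P$. The paper first shows $P$ is transitive, then $2$-transitive on $\{1,\dots,m\}$ by producing the conjugates $z_k^{x^j}=\bigl((a{-}k{-}1)b{+}1{+}j,(a{-}1)b{+}2,\dots,(a{-}1)b{+}a\bigr)$ for all $k,j$, and finally invokes Lemma~\ref{lem:jones} (Jones's theorem on primitive groups containing a cycle with at least three fixed points), treating the boundary case $b=2$ separately by hand. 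Your argument instead extracts the $3$-cycles $z_0z_k^{-1}=(P_0,P_k,Q_{a-1})$ directly and assembles $\mathrm{Alt}\{1,\dots,m\}$ by the elementary ``two alternating groups sharing two points generate the alternating group on the union'' lemma, conjugating by $x$ and then by $y$. Your approach is more self-contained---it avoids the appeal to Jones's theorem and needs no separate case for $b=2$---while the paper's approach has the advantage of reusing the same Lemma~\ref{lem:jones} that is needed anyway in the subsequent Lemma~\ref{lem:kabAn}. Both are clean; the telescoping identity you flagged as the potential obstacle is indeed a one-line computation and causes no trouble.
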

\begin{proof}
	Since $y=\bigl((a-1)b+1,\cdots,(a-1)b+a\bigr)$ is an $a$-cycle, it follows that $z_k=y^{(gx)^k}$ is also an $a$-cycle.
	By definitions, elements $x$, $g_1$ and $g_2$ fix points $(a-1)b+s$ for $2\leqslant s\leqslant a$.
	Thus $gx$ fixes points $(a-1)b+s$ for $2\leqslant s\leqslant a$ and
	\[z_k=y^{(gx)^k}=\bigl(((a-1)b+1)^{(gx)^k},(a-1)b+2,\cdots,(a-1)b+a\bigr).\]
	Notice that $\bigl((a-k-1)b+1\bigr)^{gx}=\bigl((a-k-1)b\bigr)^x=(a-k-2)b+1$ for each $0\leqslant k\leqslant a-2$.
	Hence by induction we obtain that $\bigl((a-1)b+1\bigr)^{(gx)^k}=(a-k-1)b+1$ for $0\leqslant k\leqslant a-1$, and therefore
	\[z_k=\bigl((a-k-1)b+1,(a-1)b+2,\cdots,(a-1)b+a\bigr)\mbox{, for each }0\leqslant k\leqslant a-1.\]

	Recall that $O_{x,k}=\{(k-1)b+1,(k-1)b+2,\cdots kb\}$ for $1\leqslant k\leqslant a-1$ and $O_y=\{(a-1)b+1,\cdots,(a-1)b+a\}$ are orbits of $L$, and $O=\{(a-1)b+a+1,(a-1)b+a+2,...,ab+1=n\}$ is the set of fixed points of $L$.
	Since $z_k$ fixes points in $O$ for each $0\leqslant k\leqslant a-1$, it follows that $P\leqslant \mathrm{Sym}\{1,2,...,(a-1)b+a\}$.

	For each $1\leqslant k\leqslant a-1$, the element $z_{a-k}$ maps the point $(k-1)b+1\in O_{x,k}$  to the point $(a-1)b+2\in O_y$.
	Thus $P=\langle L,z_1,...,z_{a-1}\rangle$ is a transitive subgroup of $\mathrm{Sym}\{1,2,...,(a-1)b+a\}$.

	By definition, the element $x$ fixes points $(a-1)b+2$,..., $(a-1)b+a$,
	and then
	\[\begin{aligned}
		z_k^{x^j}&=\bigl((a-k-1)b+1,(a-1)b+2,\cdots,(a-1)b+a\bigr)^{x^j}\\
		&=\bigl(((a-k-1)b+1)^{x^j},(a-1)b+2,\cdots,(a-1)b+a\bigr).
	\end{aligned}\]
	Note that $x^j$ maps the point $(a-k-1)b+1$ to the point $(a-k-1)b+1+j$ for each $1\leqslant k\leqslant a-1$ and $0\leqslant j\leqslant b-1$.
	Hence
	\[z_k^{x^j}=\bigl((a-k-1)b+1+j,(a-1)b+2,\cdots,(a-1)b+a\bigr)\mbox{, for $1\leqslant k\leqslant a-1$ and $0\leqslant j\leqslant b-1$.}\]
	Let $p_\ell=\bigl(\ell,(a-1)b+2,...,(a-1)b+a\bigr)$ for $1\leqslant \ell\leqslant (a-1)b+1$, then $p_\ell=z_k^{x^j}\in G$ for some $k$ and $j$.
	Thus the subgroup $\langle p_\ell\mid 2\leqslant \ell\leqslant (a-1)b+1\rangle\leqslant P$ is transitive on $\{2,3,...,(a-1)b+a\}$ and fixes the point $1$.
	Hence $P$ is a $2$-transitive subgroup of $\mathrm{Sym}\{1,2,...,(a-1)b+a\}$.

	Finally, in the case where $b=2$, we have that $a=2$, and $n=ab+1=5$.
	Thus $x=(12)$, $z_0=y=(34)$ and $z_1=(14)$, and hence $P=\langle x,y,z_1\rangle=\mathrm{Sym}\{1,2,3,4\}$.
	Assume that $b\geqslant 3$.
	Noticing that $y\in P$ is an $a$-cycle fixes elements $1,...,b$, and $P$ is $2$-transitive on $\{1,2,...,(a-1)b+a\}$, by Lemma~\ref{lem:jones}, we conclude that $\mathrm{Alt}\{1,2,...,(a-1)b+a\}\lhd P\leqslant G_n$.
\end{proof}

We now determine the structure of $G=\langle L,g\rangle$.

\begin{lemma}\label{lem:kabAn}
A group $G=\langle L,g\rangle$ defined in Construction~$\ref{cons:kab}$ is $2$-transitive on $\Omega$, and further,
	\[G= \left\{\begin{aligned}
		&\A_{n},\mbox{ $a$ is odd and $b$ is even};\\
		&\S_{n},\mbox{ otherwise}.
	\end{aligned}\right.\]
\end{lemma}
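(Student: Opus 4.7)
The plan is to prove $\A_n \lhd G$ and then read off whether $G = \A_n$ or $G = \S_n$ from the parities of the generators. Because $n = ab+1 \geqs 5$, the inclusion $\A_n \leqs G$ will automatically give $2$-transitivity of $G$ on $\Omega$. To obtain $\A_n \lhd G$, I intend to apply Lemma~\ref{lem:jones}: the element $y$ is a single $a$-cycle fixing $n - a = (a-1)b + 1 \geqs 3$ points (since $a, b \geqs 2$), so the only remaining hypothesis to verify is that $G$ is primitive on $\Omega$. This primitivity step is the main obstacle in the argument.

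For primitivity, $G$ is transitive by Lemma~\ref{lem:kabprop}, so it suffices to show that every block $B$ containing $n$ is trivial. Put $\Delta = \{1, \ldots, (a-1)b + a\}$; by Lemma~\ref{lem:zk}, the stabilizer $G_n$ contains $A := \Alt(\Delta)$, which fixes $\Omega \setminus \Delta$ pointwise. Since $A \leqs G_n$ preserves $B$ setwise and acts primitively on $\Delta$, the $A$-invariant subset $B \cap \Delta$ must equal $\emptyset$ or $\Delta$. If $B \supseteq \Delta$, then $|B| \geqs (a-1)b + a + 1$, and a direct check shows $2((a-1)b + a + 1) > ab + 1 = n$ whenever $a \geqs 2$, so $|B|$ cannot be a proper divisor of $n$, forcing $B = \Omega$. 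If instead $B \subseteq \Omega \setminus \Delta$, consider the $A$-invariant partition of $\Delta$ induced by those blocks of the system that meet $\Delta$. Primitivity of $A$ on $\Delta$ makes this partition trivial: one case (a single piece equal to $\Delta$) reduces to the previous situation, and the other (each piece a singleton) produces exactly $|\Delta|$ blocks meeting $\Delta$, each contributing $|B| - 1$ elements of $\Omega \setminus \Delta$. Thus $|\Delta|(|B| - 1) \leqs |\Omega \setminus \Delta| = b - a + 1$, and since $(a-1)b + a > b - a + 1$ for $a \geqs 2$, this forces $|B| = 1$.

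Having established primitivity, Lemma~\ref{lem:jones} gives $\A_n \lhd G$, so $G \in \{\A_n, \S_n\}$ and $G$ is $2$-transitive. To finish, I will compute the sign of each generator: $x$ is a product of $a-1$ disjoint $b$-cycles, so $\mathrm{sgn}(x) = (-1)^{(a-1)(b-1)}$; the element $y$ is an $a$-cycle, so $\mathrm{sgn}(y) = (-1)^{a-1}$; and $g = g_1 g_2$ is a product of $(a-1) + (b - a + 1) = b$ disjoint transpositions, so $\mathrm{sgn}(g) = (-1)^b$. All three generators lie in $\A_n$ precisely when $a$ is odd (making $y$, and hence $x$, even) and $b$ is even (making $g$ even). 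Therefore $G = \A_n$ exactly when $a$ is odd and $b$ is even, and $G = \S_n$ in all remaining cases.
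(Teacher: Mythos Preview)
Your argument is correct and takes a different route from the paper's. The paper proves $2$-transitivity directly: using $\Alt(\Delta)\leqs G_n$ from Lemma~\ref{lem:zk}, it constructs explicit elements $q_t\in G_n$ (obtained by conjugating short cycles of $P$ by $g$) that move each remaining point $(a-1)b+a+t$ into $\Delta$, which requires a case split $a=2$ versus $a\geqs 3$; primitivity and Lemma~\ref{lem:jones} then follow. You instead establish primitivity via a block argument, exploiting only that $\Alt(\Delta)$ is primitive on $\Delta$ together with the crude inequality $(a-1)b+a>b-a+1$; Lemma~\ref{lem:jones} then yields $\A_n\leqs G$, and $2$-transitivity comes for free. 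Your route is more conceptual and avoids all element-by-element computations beyond those already packaged in Lemma~\ref{lem:zk}, while the paper's approach is more hands-on and constructive. One small remark: in the sub-case where some block $C\supseteq\Delta$ with $C\neq B$, you only get $|C|\geqs|\Delta|$ (not $|\Delta|+1$), so the reduction to ``the previous situation'' really uses $2((a-1)b+a)>ab+1$; the same one-line verification $b(a-2)+2a-1>0$ covers it.
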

\begin{proof}
	Let $P=\langle L,z_1,...,z_{a-1}\rangle$ be the subgroup of $G$ defined in Lemma~\ref{lem:zk}, so that $\mathrm{Alt}\{1,2,...,(a-1)b+a\}\lhd P\leqslant G_n$.
	Since $G$ is transitive by Lemma~\ref{lem:kabprop}, to prove $G$ is $2$-transitive, we only need to find elements $q_t\in G$ for $1\leqslant t\leqslant b-a$ such that $q_t$ fixes the point $n$ and maps points $(a-1)b+a+t$ into subset $\{1,2,...,(a-1)b+a\}$.

	Assume that $a=2$.
	Then $y=(b+1,b+2)$ is a $2$-cycle in $L$, and $L\leqslant P=\mathrm{Sym}\{1,2,...,b+2\}\leqslant G_n$.
	Thus $(t,b+2)\in P\leqslant G_n$ for $1\leqslant t\leqslant b-a+1$.
	Let $q_t=(t,b+2)^g$.
	Then
	\[q_t=(t,b+2)^{g_1g_2}=\bigl((a-1)b+a+t,b+2\bigr) \mbox{ for $1\leqslant t\leqslant b-a$.}\]
	Hence $q_t$ fixes the point $n$ and maps the point $(a-1)b+a+t$ into the subset $\{1,...,b+2\}$.
	Therefore, $G$ is a $2$-transitive group when $a=2$.
	
	Assume now that $a\geqslant 3$.
	Then $p_t=(t,b-1)\bigl((a-1)b+2,(a-1)b+3\bigr)\in P$ for each $1\leqslant t\leqslant b-a+1$ since $\mathrm{Alt}\bigl\{1,2,...,(a-1)b+a\bigr\}\lhd P< G$.
	By definition, the points $b-1$, $(a-1)b+2$ and $(a-1)b+3$ are fixed by $g$, and $g$ maps the point $t$ to the point $(a-1)b+a+t$.
	Let $q_t=p_t^g$, then we have
	\[\begin{aligned}
		q_t&=p_t^g=\bigl(t^g,(b-1)^g\bigr)\bigl((a-1)b+2,(a-1)b+3\bigr)^g\\
		&=\bigl((a-1)b+a+t,b-1\bigr)\bigl((a-1)b+2,(a-1)b+3\bigr)\mbox{, for $1\leqslant t\leqslant b-a$}.
	\end{aligned}\]
	Thus $q_t$ fixes the point $n$, and maps the point $(a-1)b+a+t$ into subset $\{1,...,(a-1)b+a\}$.
	So $G$ is a $2$-transitive group.
	
	Finally, since $y$ is an $a$-cycle fixing the points ,
	we conclude that $\A_n\lhd G$ by Lemma~\ref{lem:jones}, since $G$ is $2$-transitive $y$ fixes at least $b+1\geqslant 3$ points, in fact, $y$ fixes points $1,...,b$ and $n=ab+1$.
	It is easy to see that $x$, $y$ and $g$ are all even permutations if and only if $a$ is odd and $b$ is even.
	We conclude that $G=\A_n$ if $a$ is odd and $b$ is even, or $G=\S_n$ otherwise.
\end{proof}

Now we are ready to prove Theorem~\ref{thm:completecover}.

\begin{proof}[Proof of Theorem~$\ref{thm:completecover}$:]
	Let $G$ be a group associated with a subgroup $L<G$ and a 2-element $g\in G$ defined in Construction~\ref{cons:kab}.
	Then $G$ is $2$-transitive on $\Omega$ by Lemma~\ref{lem:kabAn}, and so $G$ is symmetric on the complete graph $\Sigma=\K_n$ with vertex set $\Omega$.

	Let $\Gamma=\Cos(G,L,LgL)$.
	Since $L$ fixes the point $n$ and $g^2=1$, it follows that $\Gamma$ is a connected $G$-arc-transitive extender of $\Sigma$.
	Lemma~\ref{lem:kabprop} shows that $|L|=ab=n-1$ and $L\cap L^g=1$, then $\Gamma$ is of valency $ab=n-1$, equal to the valency of $\Sigma=\K_n$.
	Further, as $L$ is intransitive on $\{1,2,...,n-1\}$, we conclude that $\Gamma$ is a connected $G$-arc-transitive pseudocover of $\Sigma=\K_n$ by Lemma~\ref{cons:(L,g)}.
\end{proof}

\bigskip

\noindent{\bf Acknowledgements}
The author is grateful to his supervisor Prof. Cai Heng Li for his useful advice.

\medskip

\end{document}